\documentclass[twoside, 11pt, reqno]{amsart}

\usepackage[left=2.5cm, right=2.5cm, top=3cm, bottom=2.5cm]{geometry}

\usepackage[colorlinks=true, pdfstartview=FitV, linkcolor=blue,citecolor=blue, urlcolor=blue]{hyperref}

\usepackage[usenames]{xcolor}
\definecolor{labelkey}{rgb}{0,0,1}
\definecolor{Red}{rgb}{0.7,0,0.1}
\definecolor{Green}{rgb}{0,0.7,0}

\usepackage{amsfonts, amssymb, amsmath, amsthm, mathrsfs, bbm, cjhebrew, gensymb, textcomp, mathtools, dsfont,tikz}

\usepackage[normalem]{ulem}

\usepackage{commath, setspace, subcaption, parcolumns, multirow, multicol, accents, comment, marginnote, verbatim, empheq, enumerate, stackrel, enumitem}

\usepackage{cleveref}
\usepackage{graphicx}
\usepackage{epsfig}
\usepackage{psfrag}
\usepackage{float}
\usepackage[active]{srcltx}
\usepackage[pagewise, mathlines]{lineno}%\linenumbers

\newtheorem{Theorem}{Theorem}[section]

\newtheorem{Thm}{Theorem}[section]

\newtheorem{Lem}[Theorem]{Lemma}

\numberwithin{equation}{section}

\newcommand{\al}{\alpha}
\newcommand{\be}{\beta}

\newcommand{\eps}{\epsilon}

\newcommand{\kap}{\kappa}
\newcommand{\lam}{\lambda}
\newcommand{\Lam}{\Lambda}
\newcommand{\si}{\sigma}
\newcommand{\xe}{\xi-\eta}

\newcommand{\tht}{\theta}

\newcommand{\lpj}{\triangle_j}

\newcommand{\ZZ}{\mathbb{Z}}
\newcommand{\RR}{\mathbb{R}}

\newcommand{\lb}{\big\langle}
\newcommand{\rb}{\big\rangle}
\newcommand{\lbn}{\langle}
\newcommand{\rbn}{\rangle}

\newcommand{\Sob}[2]{\lVert#1\rVert_{#2}}
\newcommand{\nrm}[1]{\lVert#1\rVert}

\newcommand{\bdy}{\partial}

\newcommand{\Hdot}{\dot{H}}

\newcommand{\Acal}{\mathcal{A}}
\newcommand{\Bcal}{\mathcal{B}}

\DeclareMathOperator{\supp}{supp}

\title[Asymptotic study of supercritical generalized SQG equations in critical Sobolev spaces]{Asymptotic study of supercritical generalized SQG equations in critical Sobolev spaces}
\author{Anuj Kumar}
\thanks{}
\begin{document}
\begin{abstract}
	We study the long time behavior of regular solutions of the supercritical gSQG equations in the fully nonlinear regime. More precisely, under the assumption of small initial data in the critical Sobolev norm, we prove the existence of the unique global solution that satisfies the energy inequality \eqref{energy:inequality} and for which the critical norm $\Sob{\tht(t)}{H^{1+\be-2\al}}$ decays to 0 as time goes to infinity.
\end{abstract}
\maketitle
{\noindent \small {\it {\bf Keywords: generalized surface quasi-geostrophic (gSQG)
    equation, Smooth solution, Global regularity, Energy inequality}
 } \\
 {\it {\bf MSC 2020 Classifications:} 76B03, 35Q35, 35Q86, 35B65
   } }

%==========================================Introduction================================
\section{Introduction}
In this paper, we study the asymptotic behavior of solutions for the following generalized surface quasi-geostrophic (gSQG) equation:
\begin{align}\label{gSQG}
    \begin{cases}
        &\bdy_t \tht+(-\Delta)^{\al}\tht+u_{\tht}\cdot \nabla \tht=0,\\
        &u_{\tht}=\left(\bdy_{x_2}\Lam^{\beta-2}\tht,-\bdy_{x_1}\Lam^{\beta-2}\tht\right),\\
        &\tht(x,0)=\tht_0(x),\quad x\in \mathbb{R}^2,\,t>0,
    \end{cases}
\end{align}
where $\tht(x,t)$ denotes the evolving scalar, $u_\tht(x,t)$ denotes the advecting velocity field, $\Lam$ denotes the fractional Laplacian $(-\Delta)^{1/2}$. $\al \in (0,1)$ denotes the dissipation parameter and $\beta\in (0,2)$ denotes the constitutive law parameter. The domain is assumed to be $\mathbb{R}^2$. 
%In this regime, \eqref{gSQG} represents a fully nonlinear evolution of the scalar $\tht$.
\par The family of equations in \eqref{gSQG} was first introduced in \cite{ChaeConstantinWu2012} and \cite{ChaeConstantinCordobaGancedoWu2012}, while its inviscid counterpart was studied in \cite{ChaeConstantinWu2011}.  For $\be \in [0,1]$, \eqref{gSQG} represents a family of active scalar equations that interpolate between the 2D Euler equation in vorticity form ($\be=0$) and the dissipative surface quasi-geostrophic equation (SQG) ($\be=1$). On the other hand, for $\be \in (1,2)$, \eqref{gSQG} represents a family with more singular constitutive laws than the SQG equation. The SQG equation is a fundamental equation in geophysics and represents a model for the evolution of temperature or buoyancy of a strongly stratified fluid in a rapidly rotating regime.  It has attracted a lot of interest due to the presence of mechanisms exhibiting vortex-stretching analogous to the 3D Euler equation \cite{ConstantinMajdaTabak1994,Cordoba1998} and the presence of turbulent characteristics analogous to the 2D Euler equation \cite{MajdaTabak1996}. It is also an important toy model to study questions of well-posedness, being a 2D equation for which global regularity remains an outstanding open problem in the supercritical regime, even though global regularity has been established in the subcritical \cite {ConstantinWu1999,Resnick1995} and critical regimes  \cite{CaffarelliVasseur2010,ConstantinVicol2012,KiselevNazarovVolberg2007}. In the supercritical regime, results on local well-posedness for large data, global well-posedness for small data, and Gevrey class smoothing of solutions in time are available \cite{Miura2006, ChenMiaoZhang2007,HmidiKeraani2007, Wu2004, Biswas2014, BiswasMartinezSilva2015, Dong2010}.
\par  
In the case of the gSQG equations \eqref{gSQG}, one usually distinguishes between the subcritical ($\be<2\al$), critical ($\be=2\al$), and supercritical ($\beta>2\al$) regimes. Heuristically, these regimes correspond to the level of compensation of the loss of derivatives in the nonlinear term with the gain imposed by the dissipative term. 
Results establishing global-in-time regularity of solutions are available in subcritical and critical regimes \cite{ChaeConstantinWu2011,ChaeConstantinWu2012,MiaoXue2012} while
in the supercritical regime, the question of global-in-time regularity remains open. Nevertheless, several results proving local-in-time regularity for large initial data and global-in-time regularity for small initial data have been established \cite{HuKukavicaZiane2015,ChaeConstantinCordobaGancedoWu2012, JollyKumarMartinez2020a}.
\par In the more singular regime of the gSQG equations, i.e, when $\be\in (1,2)$, one distinguishes between two regimes based on the structure of the equations: $\be \in (1,2\al+1)$ in which case \eqref{gSQG} defines a quasilinear evolution of the scalar $\tht$ and $\be \in [2\al+1,2)$ in which case \eqref{gSQG} represents a fully nonlinear equation. In this paper, we study the asymptotic behavior of solutions in the regime where \eqref{gSQG} defines a fully nonlinear equation.
\par
The equations in \eqref{gSQG} are invariant under the following scaling:
\[\tht_{\lambda}(t,x)=\lambda^{2\al-\be}\tht(\lambda^{2\al}t,\lambda x)\]
Specifically, if $\tht$ is a solution of \eqref{gSQG} with initial data $\tht_0$, then $\tht_{\lambda}$ is also a solution of \eqref{gSQG} with initial data $(\tht_0)_{\lambda}$. It is straightforward to check that $\Hdot^{1+\be-2\al}$ is a critical space for \eqref{gSQG}.
For $\al \in(0,1/2)$ and $\be \in (1,2)$, the local existence and uniqueness for large initial data and global existence and uniqueness for small initial data were established in \cite{JollyKumarMartinez2020a}. We recall this result below in \cref{thm:main:beta:recall}. 
\begin{Thm}[\cite{JollyKumarMartinez2020a}]\label{thm:main:beta:recall}
		Let $\be\in(1,2)$  and $\al\in(0,1/2)$. For each $\tht_0\in H^{1+\be-2\al}(\RR^2)$, there exists $T>0$ and a unique solution, $\tht$, of \eqref{gSQG} such that 
		    \begin{align*}
		        \tht\in C([0,T);H^{1+\be-2\al})\cap L^2(0,T;\Hdot^{1+\be-\al}).
		    \end{align*}
If $\nrm{\tht_0}_{\Hdot^{1+\be-2\al}}$ is small enough, then $T=\infty$ is allowed.
	\end{Thm}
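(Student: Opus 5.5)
The plan is a standard energy method in the critical space, combined with Galerkin/mollifier approximation and a continuation argument. Set $s=1+\be-2\al$. Apply $\Lam^{s}$ to \eqref{gSQG}, pair with $\Lam^s\tht$, and obtain
\begin{align*}
\frac12\frac{d}{dt}\nrm{\tht}_{\Hdot^{s}}^2+\nrm{\tht}_{\Hdot^{s+\al}}^2=-\int \Lam^{s}(u_\tht\cdot\nabla\tht)\,\Lam^{s}\tht\,dx .
\end{align*}
The $L^2$ norm is nonincreasing, since $u_\tht$ is divergence free. The key difficulty is that $u_\tht$ costs $\be-1>0$ derivatives more than $\tht$. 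The nonlinearity therefore carries $1+(\be-1)=\be$ extra derivatives in total, which must be absorbed by the $2\al$ gained from dissipation; this is exactly the critical balance. I would write $u_\tht\cdot\nabla\tht=\nabla\cdot(u_\tht\tht)$ and use the commutator structure. After Littlewood--Paley decomposition (Bony paraproduct), the worst term, where all derivatives fall on $\nabla\tht$ with low-frequency $u$, cancels by incompressibility up to a commutator $[\lpj,u_{\le j}\cdot\nabla]\tht$. That commutator moves one derivative onto $u$.

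The target trilinear estimate is
\begin{align*}
\Big|\int \Lam^{s}(u_\tht\cdot\nabla\tht)\,\Lam^{s}\tht\Big|\le C\nrm{\tht}_{\Hdot^{s}}\nrm{\tht}_{\Hdot^{s+\al}}^2 .
\end{align*}
Scaling is consistent with this. For the derivative count, the $\Lam^s\tht$ factor and the two others distribute $2s+\be$ derivatives among three copies of $\tht$, and $2s+\be=s+2(s+\al)-2$, so the $-2$ is absorbed by Sobolev embedding in 2D with the Hölder triple $(2,\infty\text{-type},2)$. The crucial issue in the regime $\be\ge 1+2\al$ is whether the paraproduct term (high-frequency $u$, low-frequency $\nabla\tht$) closes. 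There $u_{j}\sim 2^{j(\be-1)}\tht_j$ is paired against $\Lam^{2s}\tht_j$ and times $\nabla\tht_{\le j}$. I would bound $\nabla\tht_{\le j}$ in $L^\infty$ by $\sum_{k\le j}2^{k(2-s)}\nrm{\tht_k}_{\Hdot^s}$, which needs $s<2$, true since $\be<2$ and $\al>0$ forces $1+\be-2\al<2$ only if $\be<1+2\al$. In the fully nonlinear range I would instead use $2^{k(2-s-\al)}\nrm{\tht_k}_{\Hdot^{s+\al}}$ and distribute the high-frequency weight as $2^{j(2s+\be-1-(s+\al)-(2-s-\al)\cdot 0)}$. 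This needs care; I would use Bernstein in $L^4$ with a weight splitting to make the exponents sum correctly, requiring $\al<1/2$. I expect this high--low interaction to be the main obstacle.

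Given the estimate, the proof finishes routinely. For small data, $\frac{d}{dt}\nrm{\tht}_{\Hdot^s}^2+(2-2C\nrm{\tht}_{\Hdot^s})\nrm{\tht}_{\Hdot^{s+\al}}^2\le0$, so if $\nrm{\tht_0}_{\Hdot^s}<1/(2C)$ the norm stays small and decreasing for all time. This gives uniform bounds in $L^\infty H^s\cap L^2\Hdot^{s+\al}$ for Galerkin approximations; Aubin--Lions compactness then yields a global solution, and the Lions--Magenes argument gives continuity in time.

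For large data, I would split $\tht=\tht^{\le N}+\tht^{>N}$ with $\nrm{\tht_0^{>N}}_{\Hdot^s}$ small. The low modes are controlled in higher norms on a short time, which gives a local time $T$ depending on the profile of $\tht_0$, not just its norm.

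For uniqueness, I would estimate the difference in $L^2$ (or $\Hdot^{\be-1-\al}$ to handle the velocity loss) using the same commutator bounds and Grönwall, with $\nrm{\tht}_{\Hdot^{s+\al}}^2\in L^1_t$.
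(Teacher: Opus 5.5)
The gap sits in the step you flag as the main obstacle: the trilinear estimate for $\be\ge 1+2\al$. That range is part of the statement, and it is the only range this paper treats. The rest of your architecture is standard, and the paper's Section~4 runs exactly your small-data continuity argument on top of Lemma~\ref{lem:main:particular}.

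For $\be<1+2\al$, your bound $\nrm{\nabla S_j\tht}_{L^\infty}\lesssim 2^{j(2-\si)}\Sob{\tht}{\Hdot^{\si}}$, with $\si=1+\be-2\al$, does close. For $\be\ge1+2\al$ the route you sketch cannot work, and a better split of exponents will not fix it. Write $g=\Lam^{\si}\lpj\tht$. Up to transport-type commutators, the interaction with high-frequency velocity is $\lbn\nabla\tht_{\mathrm{low}}\cdot\nabla^\perp\Lam^{\be-2}g,\,g\rbn$, a linear operator of order $\be-1\ge 2\al$ acting on $g$. Any H\"older or Bernstein bound puts absolute values inside the integral. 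Take a wave packet $g=\psi(x)\cos(2^jx_1)$ placed where $|\bdy_2\tht_{\mathrm{low}}|\sim1$. Then the absolute-value integral really is of size $2^{j(\be-1)}\nrm{g}_{L^2}^2=2^{j(\be-1-2\al)}\nrm{\Lam^\al g}_{L^2}^2$, while the signed integral is smaller by a factor $2^{-j}$. So the dissipation cannot absorb this term. The factor is unbounded when $\be>1+2\al$, and at $\be=1+2\al$ you would need $\nabla\tht\in L^\infty$, which $\Hdot^2(\RR^2)$ does not give. Passing to $\Hdot^{\si+\al}$ on the low modes only makes the low-frequency sum worse.

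The missing idea is the cancellation coming from $\nabla^\perp\tht\cdot\nabla\tht=0$. It gives $u_\tht\cdot\nabla\tht=[\nabla^\perp\Lam^{\be-2}\cdot,\nabla\tht]\tht$, as in \eqref{commutator:decomposition}. At the dyadic level the top-order piece then becomes $J_1=-\lbn[\bdy_\ell^\perp\Lam^{\be-2},\bdy_\ell\tht]g,g\rbn$: skew-adjointness of $\bdy_\ell^\perp\Lam^{\be-2}$ has removed the order-$(\be-1)$ part. Lemma~\ref{lem:commutator1}, with its parameters $s=2-\be$, $\rho_1=2\al$, $\rho_2=\al$, then gives $|J_1|\le C\Sob{\bdy_\ell\tht}{\Hdot^{\be-2\al}}\Sob{g}{\Hdot^{\al}}^2$. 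This uses only $\nabla\tht\in\Hdot^{\be-2\al}$ and never $L^\infty$. The remaining pieces go through Lemma~\ref{lem:commutator2}, as in the proof of Lemma~\ref{lem:main}. Your derivative count is also off by one: a trilinear form on $\RR^2$ absorbs exactly one derivative, and indeed $3\si+2\al-(2\si+\be)=1$.

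Uniqueness needs the same cancellation, and switching to $\Hdot^{\be-1-\al}$ does not replace it. For $\delta=\tht_1-\tht_2$, the term $\lbn u_\delta\cdot\nabla\tht_2,\delta\rbn$, or its $\Lam^r$-weighted version, loses $2^{j(\be-1-2\al)}$ against the dissipation for every $r$. What works is the identity $\lbn u_\delta\cdot\nabla\tht_2,\delta\rbn=-\tfrac12\lbn[\bdy_\ell^\perp\Lam^{\be-2},\bdy_\ell\tht_2]\delta,\delta\rbn$, which is bounded by $C\Sob{\tht_2}{\Hdot^{1+\be-2\al}}\Sob{\delta}{\Hdot^\al}^2$. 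You then split $\tht_2$ into two parts. The high-frequency part is uniformly small on compact time intervals, by continuity in $H^{1+\be-2\al}$. The low-frequency part is smooth and is handled by Gr\"onwall.

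Finally, Lions--Magenes does not give time continuity here. Since $\al<1/2$, the transport part of $u_\tht\cdot\nabla\tht$ already loses $1-2\al$ derivatives against the dissipation. So $\bdy_t\tht$ cannot be bounded in $L^2(0,T;H^{1+\be-3\al})$ by the norms you control. Continuity in $H^{1+\be-2\al}$ has to come instead from weak continuity plus continuity of the norm. One way is through the dyadic energy balances and Lemma~\ref{lem:Hilbert:conv}, as in Section~3.
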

    
Our main result is stated in \cref{thm:main:beta} consisting of three parts, first, we prove a regularity criterion that the local solution obtained in \cref{thm:main:beta:recall} must satisfy up to the maximal time of existence $T^*$. Second, we establish the existence of the unique global solution under the assumption of small $\Sob{\tht_0}{H^{1+\be-2\al}}$ which satisfies the energy inequality \eqref{energy:inequality}. Third, we establish that the critical norm $\Sob{\tht(t)}{H^{1+\be-2\al}}$ decays to 0 as time goes to infinity. 
\begin{Thm}\label{thm:main:beta}
		Assume that $\al\in(0,1/2)$, $\be \in [2\al+1,2)$ and $\tht_0\in H^{1+\be-2\al}(\RR^2)$. Let $\tht\in C([0,T^*);H^{1+\be-2\al})$ denote the maximal solution for \eqref{gSQG} obtained in \cref{thm:main:beta:recall}. Then, the following statements hold:
        \begin{itemize}
            \item[i)] 
            \begin{align}\label{regularity:criterion}
            T^*<\infty \implies \int_0^{T^*}\Sob{\Lam^\al \tht(s)}{H^{1+\be-2\al}}^2=\infty;\end{align}
            \item[ii)] There is a constant $C(\al,\be)>0$ such that $\Sob{\tht_0}{H^{1+\be-2\al}}<C(\al,\be)$ implies the existence of a unique global solution, $\tht$ of \eqref{gSQG} such that
            \begin{align}\label{energy:inequality}
               & \tht\in C([0,T);H^{1+\be-2\al})\cap L^2(0,T;\Hdot^{1+\be-\al}),\notag\\
                &\Sob{\tht(t)}{H^{1+\be-2\al}}^2+\int_{0}^{t}\Sob{\Lam^{\al}\tht}{H^{1+\be-2\al}}^2\,ds\le \Sob{\tht_0}{H^{1+\be-2\al}}^2,\quad \forall t \in [0,\infty);
            \end{align}
            \item[iii)] Let $\tht$ be the unique global solution obtained in ii). Then,
            \begin{align}
                \lim_{t\to \infty}\Sob{\tht(t)}{H^{1+\be-2\al}}=0.
            \end{align}
        \end{itemize}
	\end{Thm}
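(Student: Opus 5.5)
The plan is to derive all three parts from one a priori energy estimate at the critical regularity $s:=1+\be-2\al$. Apply $\Lam^{s}$ to \eqref{gSQG}, pair with $\Lam^s\tht$, and add the $L^2$ identity, which gives $\frac12\frac{d}{dt}\Sob{\tht}{L^2}^2+\Sob{\Lam^\al\tht}{L^2}^2=0$ because $\nabla\cdot u_\tht=0$. This yields
\begin{align*}
\frac12\frac{d}{dt}\Sob{\tht}{H^{s}}^2+\Sob{\Lam^{\al}\tht}{H^{s}}^2=-\int \big(\Lam^s(u_\tht\cdot\nabla\tht)-u_\tht\cdot\nabla\Lam^s\tht\big)\Lam^s\tht\,dx .
\end{align*}
For $\be\ge 2\al+1$ the velocity $u_\tht$ carries $\be-1\ge 2\al$ extra derivatives, and it is at least as rough as $\nabla\tht$. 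The commutator is therefore the delicate part. I would use a Littlewood--Paley paraproduct decomposition together with the antisymmetry of the commutator structure: the skew-symmetry of $\Lam^{\be-2}\nabla^\perp$ lets one move derivatives in the term where $u_\tht$ is at high frequency. The goal is the bound
\begin{align*}
|\mathrm{RHS}|\le C\Sob{\Lam^{s}\tht}{L^2}\Sob{\Lam^{s+\al}\tht}{L^2}^2 .
\end{align*}
Scaling forces exactly one factor at the critical level and two at the dissipative level $\Hdot^{s+\al}=\Hdot^{1+\be-\al}$. This trilinear commutator estimate is the main obstacle, since it is where the fully nonlinear regime enters. I would first try to obtain it by distributing derivatives so that each factor sits in $\Hdot^{s+\al}$ or $\Hdot^s$, using Sobolev embedding in $\RR^2$ for the low--high pieces, and presumably it is already implicit in the proof of \cref{thm:main:beta:recall}.

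For part i), argue by contradiction. Suppose $T^*<\infty$ and $\int_0^{T^*}\Sob{\Lam^\al\tht}{H^s}^2<\infty$. Then pass to a higher norm, say $H^{s+\epsilon}$ with small $\epsilon$. The analogous estimate
\begin{align*}
\frac{d}{dt}\Sob{\tht}{H^{s+\epsilon}}^2+\Sob{\Lam^\al\tht}{H^{s+\epsilon}}^2\lesssim \Sob{\Lam^\al\tht}{H^{s}}^2\Sob{\tht}{H^{s+\epsilon}}^2
\end{align*}
should follow by interpolating the commutator terms so that one factor is measured at the integrable level $\Hdot^{s+\al}$. Grönwall then keeps $\tht$ bounded in $H^{s+\epsilon}$ up to $T^*$. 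Because the local existence time in \cref{thm:main:beta:recall} can be bounded below in terms of a subcritical norm, or by a standard compactness/uniform-continuity argument for $\tht$ in $H^s$, the solution extends past $T^*$, which is a contradiction.

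For part ii), take $\Sob{\tht_0}{H^s}<C(\al,\be):=1/(2C)$ with $C$ the constant in the trilinear bound. A continuity (bootstrap) argument then shows that $\Sob{\tht(t)}{H^s}$ stays below $1/(2C)$. On that interval
\begin{align*}
\frac12\frac{d}{dt}\Sob{\tht}{H^s}^2+\tfrac12\Sob{\Lam^\al\tht}{H^s}^2\le 0,
\end{align*}
so $\Sob{\tht}{H^s}$ is nonincreasing, which closes the bootstrap. Integrating gives \eqref{energy:inequality}, up to adjusting constants, for instance by absorbing only a fraction of the dissipation with a smaller threshold. In particular $\int_0^{T^*}\Sob{\Lam^\al\tht}{H^s}^2<\infty$, so part i) gives $T^*=\infty$. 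Uniqueness is inherited from \cref{thm:main:beta:recall}. Rigorously, the energy identity should be justified on mollified or Galerkin approximations and then passed to the limit, which is where an inequality rather than an equality naturally appears.

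For part iii), monotonicity gives that $\Sob{\tht(t)}{H^s}$ decreases to some limit, and the task is to show that this limit is $0$. I would split $\tht$ into low frequencies $P_{\le N}\tht$ and high frequencies $P_{>N}\tht$. The high part satisfies $\Sob{P_{>N}\tht}{H^s}^2\le N^{-2\al}\Sob{\Lam^\al\tht}{H^s}^2$, which is integrable in time, so it is small on a set of times of large measure. For the low part, use a Schonbek/Wiegner-type argument: since $\tht\in L^2$ with $\Sob{\Lam^\al\tht}{L^2}\in L^2_t$, the $L^2$ norm $\Sob{\tht(t)}{L^2}$ decreases and in fact tends to $0$. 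This follows from Fourier splitting, using $\hat\tht_0\in L^2$ and the boundedness of the nonlinear flux on low frequencies in terms of the energy. Then $\Sob{P_{\le N}\tht}{\Hdot^s}\le N^s\Sob{\tht}{L^2}\to0$. Combining the two pieces, $\Sob{\tht(t_k)}{H^s}\to0$ along a sequence of times $t_k\to\infty$, and monotonicity upgrades this to the full limit. The most delicate point in this part is proving $\Sob{\tht(t)}{L^2}\to0$ without $L^1$ assumptions on the data. I would handle it by approximating $\tht_0$ in $L^2$ by data with $\hat\tht_0$ bounded near $0$ and using $L^2$-stability of the flow in the small-data class.
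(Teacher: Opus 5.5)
Your overall architecture is sound, and part ii) is exactly the paper's bootstrap. The gap is that the step everything rests on is asserted rather than proved. The bound
\[
\sum_{j\in\ZZ}\big|\lbn \Delta_j(u_\tht\cdot\nabla\tht),\Delta_j(-\Delta)^{s}\tht\rbn\big|\le C\Sob{\tht}{\Hdot^{s}}\Sob{\tht}{\Hdot^{s+\al}}^2,\qquad s=1+\be-2\al,
\]
is not implicit in Theorem~\ref{thm:main:beta:recall}. It is precisely the paper's new ingredient (Lemma~\ref{lem:main}).

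Incidentally, $u_\tht$ is not ``as rough as $\nabla\tht$'': it has order $\be-1<1$. The real difficulty is that $\be-1\ge 2\al$, so the term where all of $\Lam^s$ falls on $u_\tht$ cannot be absorbed by the dissipation directly.

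Your remedy for that piece, skew-adjointness of $\nabla^\perp\Lam^{\be-2}$, is the right mechanism, but it has to be carried out. The paper also does it blockwise, so that the energy inequality holds for the given solution in $C(H^s)\cap L^2(\Hdot^{s+\al})$ and not only for approximations. Concretely:
\begin{itemize}
\item $\nabla^\perp\tht\cdot\nabla\tht=0$ gives $u_\tht\cdot\nabla\tht=[\nabla^\perp\Lam^{\be-2}\cdot,\nabla\tht]\tht$.
\item The top-order part is the commutator $[\bdy^\perp_\ell\Lam^{\be-2},\bdy_\ell\tht]$ acting on $\Lam^s\Delta_j\tht$. It is bounded by Lemma~\ref{lem:commutator1} with $\rho_1=2\al$, $\rho_2=\al$.
\item The remaining parts are expanded through $\Lam^{\si}=-\Lam^{\si-2}\bdy_l\bdy_l$ into commutators controlled by Lemma~\ref{lem:commutator2}. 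This is where $\si\in(2-\al,3-\al)$ is needed.
\end{itemize}

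Your part i) also needs ingredients you do not supply. Grönwall in $H^{s+\eps}$ presupposes $\tht(t_0)\in H^{s+\eps}$ for some $t_0$; this holds for a.e.\ $t_0>0$ if $\eps\le\al$. It also presupposes that the solution stays in $C([t_0,T^*);H^{s+\eps})$, i.e.\ a subcritical local theory plus uniqueness, which Theorem~\ref{thm:main:beta:recall} does not give. You also need a second trilinear bound with factors $\Sob{\tht}{\Hdot^{s+\al}}\Sob{\tht}{\Hdot^{s+\eps}}\Sob{\tht}{\Hdot^{s+\eps+\al}}$. The paper's lemma at level $s+\eps$ puts $\Sob{\tht}{\Hdot^{s}}$ in front of the dissipation, so it can be neither absorbed nor Grönwalled for large data. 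Both ingredients are obtainable. Once you have them, your interpolation fallback closes the argument: $L^2$-Cauchy plus $H^{s+\eps}$-bounded gives $H^s$-Cauchy.

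The paper never leaves critical regularity:
\begin{itemize}
\item It gets a uniform $H^s$ bound from the critical estimate and the smallness of $\int_{T_0}^{T^*}\Sob{\Lam^\al\tht}{H^s}^2$.
\item It gets an $L^2$-limit $\tilde\tht$ at $T^*$ directly from the equation.
\item It upgrades to strong $H^s$ convergence via energy inequalities at levels $\gamma_n\uparrow s$ and Lemma~\ref{lem:Hilbert:conv}, then restarts Theorem~\ref{thm:main:beta:recall} at $\tilde\tht$.
\end{itemize}

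For iii), your route differs from the paper's and is viable, and simpler than you fear. The paper uses a $\delta^\kap$ gain at low frequencies, Duhamel in $\Hdot^{-\kap}$ at high frequencies, then Chebyshev and restarting. In your Fourier splitting, note that
\[
|\mathcal{F}(u_\tht\cdot\nabla\tht)(\xi)|\lesssim|\xi|\,\Sob{\tht}{\Hdot^{\be-1}}\Sob{\tht}{L^2}\lesssim|\xi|\,\Sob{\tht_0}{H^s}^2,
\]
so $|\hat\tht(\xi,t)|\le|\hat\tht_0(\xi)|+C|\xi|t$. With $\rho(t)^{2\al}=(1+t)^{-1}$, the low-frequency mass is $\lesssim\int_{|\xi|\le\rho}|\hat\tht_0|^2+t^2\rho^4$. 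This tends to $0$ for any $L^2$ datum because $\al<1$. Hence your approximation/$L^2$-stability step is unnecessary, and your high/low split plus monotonicity then gives the $H^s$ decay.
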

The corresponding results in the quasilinear regime, i.e. when $\be \in (1,2\al+1)$,  were established recently in \cite{Melo2024} while the case of SQG ($\be=1$) was done in \cite{BenameurKatar2022} (see also \cite{BenameurBlel2014, BenameurBenhamed2015, BenameurAbdallah2021,AmaraBenameur2022}). Additionally, when $\be\in (2\al,\al+1)$ and $\tht_0\in \Hdot^{s}\cap L^p \cap \dot{B}^{\be-2\al}_{\infty,\infty}$ is assumed to be small in $\dot{B}^{\be-2\al}_{\infty,\infty}$, ($s>0$ and $p\in[1,2)$), it was shown in \cite{Ye2022} that $\Sob{\tht(t)}{H^s}\to 0$ as $t\to \infty$. Therefore, \cref{thm:main:beta} extends the result of \cite{Melo2024} and \cite{Ye2022} to the fully nonlinear regime of the gSQG equations. To prove \cref{thm:main:beta}, we follow an approach that combines the methods used in \cite{Melo2024} and \cite{BenameurKatar2022} along with Littlewood-Paley decomposition techniques that enable us a finer control over the nonlinear interactions in the energy arguments. Along with sharp commutator estimates established previously in \cite{JollyKumarMartinez2020a, JollyKumarMartinez2020b}, we prove a preliminary lemma stated in \cref{lem:main}. This lemma is the counterpart of lemma 2.3 in \cite{Melo2024} and provided the basis for extending the methods of \cite{BenameurKatar2022} beyond the SQG equations. 
\section{Mathematical Preliminaries}
In this section, we present the relevant notations and definitions used in the forthcoming sections.
Let $\mathscr{S}(\RR^2)$ be the space of Schwartz class functions on $\RR^2$ and $\mathscr{S}'(\RR^2)$ denote the space of tempered distributions. We denote by $\hat{f}$ or $\mathcal{F}(f)$, the Fourier transform of $f$, which is defined by
	\[\hat{f}(\xi)\overset{}{:=}\int e^{-2\pi i x\cdot \xi}f(x)dx,\quad f\in\mathscr{S}'(\RR^2).\] 
   We recall that for $f,g$, we have
	    \begin{align}\notag
	        \lbn f,g \rbn_{L^2}=\lbn\hat{f},\hat{g}\rbn_{L^2}.
	    \end{align}
   $\Lam^\si$ denotes the fractional laplacian operator which is defined as
	    \begin{align}\notag
	        \mathcal{F}(\Lam^\si f)(\xi)=|\xi|^\si\mathcal{F}(f),\quad \si\in\RR.
	    \end{align} 
     Next, we define the $L^2$-based homogeneous and inhomogeneous Sobolev spaces. For $\si \in \mathbb{R}$, we have
	    \begin{align}
	        &\Hdot^\si(\mathbb{R}^2){:=}\left\{f\in \mathscr{S}(\RR^2):\hat{f}\in L^2_{loc},\, \Sob{f}{\Hdot^\si}:=\Sob{\Lam^\si f}{L^2}<\infty\right\},\label{def:hom:Sob:norm}\\
	        &H^\si(\mathbb{R}^2){:=}\left\{f\in \mathscr{S}(\RR^2):\hat{f}\in L^2_{loc}, \Sob{f}{H^\si}{=}\Sob{(I-\Delta)^{\si/2}) f}{L^2}<\infty\right\}.\label{def:inhom:Sob:norm}
	    \end{align}
Since for $\si \ge 0$, we have
\[2^{-\si}\{1+|\xi|^2\}^{\si}\le 1+|\xi|^{2\si}\le 2^\si\{1+|\xi|^2\}^\si,\quad \forall \xi\in \mathbb{R}^2.\]
Hence, we use the following equivalent definition of the inhomogeneous Sobolev spaces $H^{\si}$ when $\si\ge0$:
\begin{align}
    H^\si(\mathbb{R}^2){:=}\left\{f\in \mathscr{S}(\RR^2):\hat{f}\in L^2_{loc}, \Sob{f}{H^\si}^2{:=}\Sob{f}{L^2}^2+\Sob{f}{\Hdot^{\si}}^2<\infty\right\}.\label{def:inhom:Sob:norm}
\end{align}
Next, we give a brief introduction to Littlewood-Paley decomposition of functions.
We define
	\begin{align*}
	\mathscr{Q}(\RR^2){:=}\left\{f\in \mathscr{S}(\RR^2): \int f(x)x^{\tau}\, dx=0, \quad \abs{\tau}=0,1,2,\cdots \right\}.
	\end{align*}
	Let us denote by $\mathscr{Q}(\RR^2)'$, the topological dual of $\mathscr{Q}(\RR^2)$. Then, $\mathscr{Q}(\RR^2)'$ can be identified with the space of tempered distributions modulo the vector space of polynomials on $\mathbb{R}^2$, denoted by $\mathscr{P}$, i.e.
	\begin{align*}
	\mathscr{Q}'(\RR^2)\cong\mathscr{S}(\RR^2)/\mathscr{P}.
	\end{align*}
	Let ${\Bcal}(r)$ denote the open ball centered at the origin with radius $r$ and ${\Acal}(r_{1},r_{2})$ denote the open annulus centered at the origin with inner and outer radii $r_{1}$ and $r_{2}$. We can find two non-negative radial functions denoted $\chi,\phi\in\mathscr{S}(\RR^2)$ such that $\supp\chi\subset{\Bcal}(1)$ and $\supp\phi\subset{\Acal}(2^{-1},2)$ and such that they satisfy the following conditions 
	\begin{align}
	    \begin{cases}
	    \sum_{j\in\ZZ}\phi(2^{-j}\xi)=1,\\
	    \chi(\xi)+\sum_{j\geq0}\phi(2^{-j}\xi)\equiv 1,\,\forall \xi\in\RR^2\setminus\{\mathbf{0}\},
	    \end{cases}\label{LP:conditions}
	\end{align}
    We denote by
    \[\phi_j(\xi){=}\phi(2^{-j}\xi),\quad \chi_j(\xi){=}\chi(2^{-j}\xi)\]
    and
    \begin{align}\notag            {\Acal}_{j}= {\Acal}(2^{j-1},2^{j+1}),\quad{\Acal}_{\ell,k}= {\Acal}(2^{\ell},2^{k}),\quad  {\Bcal}_j={\Bcal}(2^j).
        \end{align}
    Note that
	    \begin{align}\label{rewrite:supp}
	        \supp\phi_j\subset{\Acal}_j,\quad \supp\chi_j\subset{\Bcal}_j.
	    \end{align}
    It is straightforward to check that the following almost-orthogonality conditions hold
    \[\supp\phi_i\cap\supp\phi_j=\varnothing,\,\text{if}\,
	    |i-j|\geq2,\quad
	    \text{and}\quad\supp\phi_i\cap\supp\chi =\varnothing, \,\text{if}\,i\ge 1.\]
	We denote by ${\lpj}$ and $S_{j}$, the (homogeneous) Littlewood-Paley dyadic blocks defined by
	\begin{align}\notag
	\mathcal{F}({\lpj}f)=\phi_{j}\mathcal{F}(f), \quad \mathcal{F}(S_{j}f)=\chi_{j}\mathcal{F}(f). 
	\end{align}
    Clearly, by \eqref{LP:conditions}, we have
	\begin{align*}
	&\mathcal{F}({\lpj}f)|_{{\Acal}_j^c}=0,\quad
	\mathcal{F}(S_{j}f)|_{{\Bcal}_j^{c}}=0,
	\end{align*}
    Note that for any $f \in \mathscr{S}(\RR^2)$, we have
    \begin{align*}
         f&=S_if+\sum_{j\geq i}\lpj f,\quad i\in\ZZ.
    \end{align*}
	and for that any $f\in\mathscr{Q}(\RR^2)'$, we have
	    \begin{align*}
	        f&=\sum_{j\in\ZZ}\lpj f.
	    \end{align*}
We have the following equivalence
	    \begin{align}\label{norm:equivalence}
	       C^{-1}\Sob{f}{L^2}\leq \left(\sum_{j\in \mathbb{Z}}\left(\nrm{\lpj f}_{L^2}\right)^{2}\right)^{\frac{1}{2}}\leq C\Sob{f}{L^2},
	    \end{align}
	for some constant $C$. We recall the following classical inequality (see \cite{BahouriCheminDanchinBook2011}):
    \begin{Lem}[Bernstein inequalities]\label{T:Bernstein}
		Let $\si\in\RR$ and $1\le p \le q\le \infty$. Then
		\begin{align*}
	C^{-1}2^{\si j}\nrm{{\lpj}f}_{L^q(\mathbb{R}^2)}\le \nrm{\Lam^{\si}{\lpj}f}_{L^q(\mathbb{R}^2)}\le C 2^{\si j+2j(\frac{1}{p}-\frac{1}{q})}\nrm{{\lpj}f}_{L^p(\mathbb{R}^2)},
		\end{align*}
	 where $C>0$ is a constant that depends on $p,q$ and $\si$.
	\end{Lem}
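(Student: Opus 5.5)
The statement to prove is the Bernstein inequality for a single Littlewood--Paley block: a lower bound $2^{\si j}\nrm{\lpj f}_{L^q}\lesssim\nrm{\Lam^\si\lpj f}_{L^q}$ and an upper bound $\nrm{\Lam^\si\lpj f}_{L^q}\lesssim 2^{\si j+2j(1/p-1/q)}\nrm{\lpj f}_{L^p}$. The plan is to reduce by scaling to $j=0$ and then write each operator on the block as convolution with a fixed Schwartz kernel. The frequency support of $\lpj f$ lies in the annulus ${\Acal}_j$, so we may insert a smooth cutoff. Choose $\tilde\phi\in C_c^\infty(\RR^2\setminus\{0\})$ with $\tilde\phi\equiv1$ on ${\Acal}(2^{-1},2)$ and support in ${\Acal}(2^{-2},2^{2})$, and set $\tilde\phi_j=\tilde\phi(2^{-j}\cdot)$. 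Then
\[
\Lam^\si\lpj f = K_j * \lpj f, \qquad \widehat{K_j}(\xi)=|\xi|^\si\tilde\phi_j(\xi)=2^{\si j}\,\psi(2^{-j}\xi),
\]
where $\psi(\eta)=|\eta|^\si\tilde\phi(\eta)$. Since $\tilde\phi$ vanishes near the origin, $\psi$ is smooth and compactly supported for every real $\si$. Hence $K_j(x)=2^{\si j}2^{2j}\check\psi(2^jx)$ with $\check\psi\in\mathscr{S}(\RR^2)$.

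For the upper bound, apply Young's inequality with $1+1/q=1/r+1/p$; this $r\in[1,\infty]$ exists because $p\le q$. It gives
\[
\nrm{\Lam^\si\lpj f}_{L^q}\le\nrm{K_j}_{L^r}\nrm{\lpj f}_{L^p}.
\]
A change of variables yields $\nrm{K_j}_{L^r}=2^{\si j}2^{2j(1-1/r)}\nrm{\check\psi}_{L^r}$. Since $1-1/r=1/p-1/q$, this is exactly $C2^{\si j+2j(1/p-1/q)}$ with $C=\nrm{\check\psi}_{L^r}$, which depends only on $p,q,\si$.

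For the lower bound, apply the same argument with exponent $-\si$ and $p=q$ (so $r=1$) to the function $g=\Lam^\si\lpj f$, whose Fourier transform is still supported in ${\Acal}_j$. We have $\lpj f=\Lam^{-\si}g=\tilde K_j*g$, where $\widehat{\tilde K_j}=|\xi|^{-\si}\tilde\phi_j$. This gives
\[
\nrm{\lpj f}_{L^q}\le 2^{-\si j}\nrm{\mathcal F^{-1}(|\eta|^{-\si}\tilde\phi)}_{L^1}\nrm{\Lam^\si\lpj f}_{L^q},
\]
which rearranges to the left inequality.

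The only delicate point is justifying $\Lam^\si\lpj f=K_j*\lpj f$ for general $f$ (say $f\in\mathscr{S}'$ or $\mathscr{Q}'$, with $\lpj f\in L^p$). This holds because $\tilde\phi_j\phi_j=\phi_j$ and the multipliers are smooth and compactly supported away from $0$, so the identity holds at the level of tempered distributions. This is standard (see \cite{BahouriCheminDanchinBook2011}).
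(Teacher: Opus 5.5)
Your proof is correct. The rescaled cutoff $\tilde\phi_j$ with $\tilde\phi_j\phi_j=\phi_j$, the Schwartz kernel $K_j(x)=2^{\si j}2^{2j}\check\psi(2^jx)$, Young's inequality with $1+\frac1q=\frac1r+\frac1p$, and the lower bound obtained by inverting $\Lam^{\si}$ on the block via $|\xi|^{-\si}\tilde\phi_j$ are all handled properly. The paper does not prove this lemma and only cites \cite{BahouriCheminDanchinBook2011}, whose standard argument (rescaling, a cutoff equal to one on the annulus, and Young's inequality) is essentially the one you give.
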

     We denote the commutator of two operators, $S$ and $T$, by $[S,T]$, where
    \[[S,T]:=ST-TS.\]
    We require the following commutator estimate which was proved in \cite{JollyKumarMartinez2020a}:
\begin{Lem}[\cite{JollyKumarMartinez2020a}]\label[Lem]{lem:commutator1}
Let $s\in(0,1)$,  ${\rho_1\in (0,2)}$, ${\rho_2} \in (-1,1)$ be such that $\rho_2>\rho_{1}-1$. Let $f\in\dot{H}^{\rho_2}(\mathbb{R}^2)$ and $g \in \dot{H}^{2-s-\rho_1}(\mathbb{R}^2)$. Then there exists a constant $C>0$, depending only on $s,{\rho_1,\rho_2}$, such that
    \begin{align}
        \Sob{[\Lam^{-s}\bdy_\ell,g]f}{\Hdot^{\rho_2-\rho_1}}\leq C\Sob{g}{\dot{H}^{2-s-\rho_1}}\Sob{ f}{\Hdot^{\rho_2}},\quad \ell=1,2.\notag
    \end{align}
\end{Lem}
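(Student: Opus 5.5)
The plan is to prove the estimate by a Bony paraproduct decomposition, treating the frequency interactions separately. Write $P:=\Lam^{-s}\bdy_\ell$; it is a Fourier multiplier with symbol $p(\xi)=2\pi i\xi_\ell|\xi|^{-s}$, homogeneous of degree $1-s$. Splitting $gf=T_gf+T_fg+R(f,g)$ with $T_gf=\sum_j S_{j-1}g\,\lpj f$ and $R(f,g)=\sum_{|j-k|\le1}\lpj g\,\triangle_k f$, we get
\begin{align*}
[P,g]f=[P,T_g]f+\big(P(T_fg)-T_{Pf}g\big)+\big(PR(f,g)-R(Pf,g)\big).
\end{align*}
Only the first term needs the commutator structure. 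The other two are bounded crudely, term by term, using the hypotheses $\rho_2<1$ and $\rho_2>\rho_1-1$ to make the dyadic sums converge.

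\emph{Term $[P,T_g]f$ (low frequencies of $g$).} Each piece $[P,S_{j-1}g]\lpj f$ has Fourier support in an annulus of size $2^j$. Write $P\widetilde\phi_j$ as convolution with a kernel $2^{j(3-s)}K(2^j\cdot)$, where $\widetilde\phi_j$ is a fattened cutoff and $K\in\mathscr{S}$. The first-order Taylor (mean value) expansion of $S_{j-1}g$ then gives
\begin{align*}
\nrm{[P,S_{j-1}g]\lpj f}_{L^2}\lesssim 2^{-js}\nrm{\nabla S_{j-1}g}_{L^\infty}\nrm{\lpj f}_{L^2}.
\end{align*}
By the Bernstein inequalities (\cref{T:Bernstein}),
\begin{align*}
\nrm{\nabla S_{j-1}g}_{L^\infty}\lesssim\sum_{k<j}2^{k(s+\rho_1)}\,2^{k(2-s-\rho_1)}\nrm{\triangle_kg}_{L^2}\lesssim 2^{j(s+\rho_1)}\Sob{g}{\Hdot^{2-s-\rho_1}},
\end{align*}
which uses $s+\rho_1>0$. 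Multiplying by $2^{j(\rho_2-\rho_1)}$ yields $2^{j\rho_2}\nrm{\lpj f}_{L^2}\Sob{g}{\Hdot^{2-s-\rho_1}}$. Summing in $\ell^2$ with almost orthogonality then gives the bound.

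\emph{Low–high terms $P(T_fg)$ and $T_{Pf}g$.} Here the high frequency sits on $g$, and $f$ (or $Pf$) is placed in $L^\infty$. Bernstein gives $\nrm{S_{j-1}f}_{L^\infty}\lesssim\sum_{k<j}2^{k(1-\rho_2)}2^{k\rho_2}\nrm{\triangle_kf}_{L^2}\lesssim 2^{j(1-\rho_2)}\Sob{f}{\Hdot^{\rho_2}}$, which needs $\rho_2<1$. Similarly, $\nrm{S_{j-1}Pf}_{L^\infty}\lesssim 2^{j(2-s-\rho_2)}\Sob{f}{\Hdot^{\rho_2}}$, which needs $2-s-\rho_2>0$. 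In both cases the dyadic block is bounded by $2^{j(2-s-\rho_2)}\nrm{\lpj g}_{L^2}\Sob{f}{\Hdot^{\rho_2}}$. For $P(T_fg)$ the factor $2^{j(1-s)}$ comes from $P$ acting at frequency $2^j$; for $T_{Pf}g$ it is already contained in the $L^\infty$ bound. Weighting by $2^{j(\rho_2-\rho_1)}$ gives exactly $2^{j(2-s-\rho_1)}\nrm{\lpj g}_{L^2}$, which is square-summable.

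\emph{Remainder terms (high–high).} For $\triangle_q$ of $R$, only blocks with $j\gtrsim q$ contribute. Bernstein from $L^1$ to $L^2$ gives
\begin{align*}
\nrm{\triangle_q PR(f,g)}_{L^2}\lesssim 2^{q(1-s)}\,2^{q}\sum_{j\ge q-2}\nrm{\lpj g}_{L^2}\nrm{\lpj f}_{L^2}.
\end{align*}
After multiplying by $2^{q(\rho_2-\rho_1)}$, the sum over $q\le j+2$ is geometric with exponent $\rho_2-\rho_1+2-s>0$, which follows from $\rho_2>\rho_1-1$ and $s<1$. It produces $2^{j\rho_2}\nrm{\lpj f}_{L^2}\,2^{j(2-s-\rho_1)}\nrm{\lpj g}_{L^2}$, and Cauchy–Schwarz in $j$ finishes the estimate. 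The term $R(Pf,g)$ is handled the same way with exponent $\rho_2-\rho_1+1>0$.

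The main obstacle is the first term. It is the only place where the loss of $1-s$ derivatives must be recovered from commutation, through the kernel/mean-value bound. It also requires $\nabla S_{j-1}g$ in $L^\infty$ to be controlled from an $L^2$-based norm, which works only because $s+\rho_1>0$; with a borderline exponent the sum would diverge logarithmically. The remaining terms reduce to bookkeeping of exponents, and the hypotheses on $\rho_1,\rho_2,s$ are exactly what make each dyadic sum converge.
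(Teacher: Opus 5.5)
Your argument is correct, but it does not follow the route behind this lemma. The paper does not prove the statement; it imports it from \cite{JollyKumarMartinez2020a}. The sketch in Appendix~\ref{sec:app} for the companion Lemma~\ref{lem:commutator2} shows the method used there. One pairs with $h\in\Hdot^{\rho_1-\rho_2}$ and writes $\lb[\Lam^{-s}\bdy_\ell,g]f,h\rb$ as a trilinear Fourier integral with symbol $p(\xi)-p(\xi-\eta)$. This difference is bounded pointwise by the mean value theorem (a factor $|\eta|$ times a power of the frequencies), and the argument closes with a weighted product estimate on the Fourier side (Lemma~4.1 of \cite{JollyKumarMartinez2020a}). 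That approach is short and handles all frequency interactions with one symbol bound, but the constraints on $s,\rho_1,\rho_2$ enter through the product lemma rather than visibly. Your Bony decomposition uses the commutator structure only where $g$ is at low frequency, through the kernel bound $2^{-js}\nrm{\nabla S g}_{L^\infty}\nrm{\lpj f}_{L^2}$. It treats the low--high and high--high pieces as plain products. It also shows which hypothesis each regime consumes: $s+\rho_1>0$ for the high--low term, $\rho_2<1$ for the low--high terms, and $\rho_2>\rho_1-1$ together with $s<1$ for the remainders. (The conditions $\rho_2>-1$ and $\rho_1<2$ are in fact implied by the others.)

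One repair is needed. With the paper's normalization $\supp\chi\subset\Bcal(1)$, $\supp\phi\subset\Acal(2^{-1},2)$, the product $S_{j-1}g\,\lpj f$ is \emph{not} supported in an annulus: from $|\eta|<2^{j-1}<|\xi-\eta|$ you only get $\xi\neq 0$. So as written, neither the replacement of $P$ by $P\widetilde{\phi}_j$ nor the almost-orthogonal $\ell^2$ summation with weight $2^{j(\rho_2-\rho_1)}$ is justified. Use $S_{j-2}$ in all three paraproducts and widen the remainder to $|j-k|\le 2$. Then $S_{j-2}g\,\lpj f$ is supported in $\{2^{j-2}<|\xi|<2^{j+2}\}$, and every estimate goes through unchanged.
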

Next, we recall the following commutator estimate for operators which are of the form of a product of Fourier multiplier operators given by $\Lam^{\si}, \bdy_{\ell}$, and $\triangle_j$. 
We remark that Lemma \ref{lem:commutator2} for the setting of $i=j$ was proved in \cite{JollyKumarMartinez2020b}. The proof in the current setting of $i\sim j$ is similar. For the convenience of the reader, we provide a brief sketch of the proof in Appendix \ref{sec:app}. 
\begin{Lem}[\cite{JollyKumarMartinez2020b}]\label[Lem]{lem:commutator2}
Let $s \in [0,1)$, $\rho \in \mathbb{R}$, and $\nu \in (0,2)$ be such that $s>\nu-1$. Given $i, j\in \mathbb{Z}$ such that $|i-j|\le k$ for a positive integer $k$, suppose that $\supp\hat{h}\subset{\Acal}_j$. 
Then there exists a sequence $\{c_j\}\in\ell^2(\ZZ)$ satisfying $\Sob{\{c_j\}}{\ell^2}\leq1$ and such that
        \begin{align}\label{commutator2:A}
        |\lb [\Lam^{s+\rho}\bdy_{\ell}\Delta_i,g]f,h  \rb|
        \leq Cc_{j}
              \min\left\{ \Sob{{f}}{\Hdot^{1-\nu}}\Sob{g}{\Hdot^{s+1}},\Sob{{f}}{\Hdot^s}\Sob{g}{\Hdot^{2-\nu}}\right\}\Sob{h}{\Hdot^{\rho+\nu}}
    \end{align}
    and
     \begin{align}\label{commutator2:B}
        |\lb [\Lam^{s+\rho+1}\Delta_i,g]f,h  \rb|
        \leq Cc_{j}
              \min\left\{ \Sob{{f}}{\Hdot^{1-\nu}}\Sob{g}{\Hdot^{s+1}},\Sob{{f}}{\Hdot^s}\Sob{g}{\Hdot^{2-\nu}}\right\}\Sob{h}{\Hdot^{\rho+\nu}}
    \end{align}
hold for some constant $C>0$, depending only on $s,\rho, \nu$. 
\end{Lem}
We now state the main lemma attributed to earlier. Unlike in \cite{Melo2024}, in the fully nonlinear case of $\be \in [2\al+1,2)$, we establish the control over nonlinear terms as they appear in energy arguments upon action by Littlewood-Paley dyadic operators. The proof involves a finer control over nonlinear interactions afforded by decomposing the terms in the frequency space and applying commutator estimates \cref{lem:commutator1} and \cref{lem:commutator2}.
\begin{Lem}\label[Lem]{lem:main}
    Let $\al \in (0,1)$, $\be\in [2\alpha+1,2)$, and $\si \in (2-\al,3-\al)$. For $\tht\in H^{1+\be-2\al}(\mathbb{R}^2)\cap \dot{H}^{\si+\al}(\mathbb{R}^2)$, we have the following inequality
    \begin{align}\label{lem1}
    \sum_{j\in \mathbb{Z}}|\lbn \Delta_j (u_{\tht}\cdot \nabla\tht),\Delta_j(-\Delta)^{\si}\tht\rbn_{L^2}|\le C\Sob{\tht}{\dot{H}^{1+\be-2\al}}\Sob{\tht}{\dot{H}^{\si+\al}}^2,
    \end{align}
    where $C=C(\al, \be, \si)$ is a positive constant.
\end{Lem}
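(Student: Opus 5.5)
Write $\Lam^{2\si}=(-\Delta)^{\si}$, and note that the operator $u_\tht\cdot\nabla$ is skew-adjoint because $\nabla\cdot u_\tht=0$. For each $j$ I move the full weight $\Delta_j\Lam^{2\si}$ onto the nonlinearity and subtract the transport part. Concretely, set $\tilde\Delta_j=\Delta_{j-1}+\Delta_j+\Delta_{j+1}$ and $h_j=\tilde\Delta_j\Delta_j\Lam^{\si-\nu}\tht$, where $\nu\in(0,2)$ is a parameter to be fixed. Then
\[
\lbn \Delta_j(u_\tht\cdot\nabla\tht),\Delta_j\Lam^{2\si}\tht\rbn=\lbn \Lam^{\si+\nu}\Delta_j(u_\tht\cdot\nabla\tht),h_j\rbn .
\]
Write $u_\tht\cdot\nabla\tht=\bdy_\ell(u^\ell\tht)$ and split
\[
\Lam^{\si+\nu}\bdy_\ell\Delta_j(u^\ell\tht)=[\Lam^{\si+\nu}\bdy_\ell\Delta_j,u^\ell]\tht+u^\ell\,\bdy_\ell\Lam^{\si+\nu}\Delta_j\tht .
\]

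The first piece is handled by \cref{lem:commutator2}, estimate \eqref{commutator2:A}. Take $g=u^\ell=\pm\bdy\Lam^{\be-2}\tht$, $f=\tht$, $h=h_j$ (whose Fourier support lies in $\Acal_j$), and $i=j$, with the splitting $s+\rho=\si+\nu$. This gives
\[
C c_j\,\Sob{\tht}{\Hdot^{a}}\,\Sob{u}{\Hdot^{b}}\,\Sob{h_j}{\Hdot^{\rho+\nu}} .
\]
I then choose $s,\rho,\nu$ so that three conditions hold:
\begin{itemize}
\item[(a)] one factor sits in the critical norm, e.g.\ $\Sob{u}{\Hdot^{s+1}}=\Sob{\tht}{\Hdot^{s+\be}}$ with $s+\be=1+\be-2\al$, i.e.\ $s=1-2\al$;
\item[(b)] the other two factors sit at $\Hdot^{\si+\al}$;
\item[(c)] the hypotheses $s\in[0,1)$ and $s>\nu-1$ hold.
\end{itemize}
With $s=1-2\al$ the factor $\Sob{f}{\Hdot^{1-\nu}}$ is too low in regularity, so I expect to use the other branch of the minimum, $\Sob{f}{\Hdot^s}\Sob{g}{\Hdot^{2-\nu}}$, with a different split. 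Summing over $j$ with Cauchy--Schwarz, $\sum_j c_j\Sob{h_j}{\Hdot^{\rho+\nu}}\lesssim\Sob{\tht}{\Hdot^{\si+\al}}$ once $\rho+\nu+\si-\nu=\si+\al$, i.e.\ $\rho=\al$. Balancing derivative counts through the scaling identity $(\si+\al)\cdot 2+(1+\be-2\al)=2\si+1+\be$ shows the total order is consistent. The remaining freedom lies in interpolating between the two branches and, if necessary, using the symmetric split $[\Lam^{\si+\nu}\Delta_j,u^\ell]\bdy_\ell\tht$ with \eqref{commutator2:B}.

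The second piece is $\lbn u^\ell\bdy_\ell\Lam^{\si+\nu}\Delta_j\tht,h_j\rbn$, and here I decompose $h_j$ further. Writing $\Lam^{\si+\nu}\Delta_j\tht=\Lam^{2\nu}\Lam^{\si-\nu}\Delta_j\tht$, I commute $\Lam^{2\nu}$ across the transport to produce
\[
\lbn u\cdot\nabla\, \Lam^{\si-\nu}\Delta_j\tht,\Lam^{2\nu}h_j\rbn .
\]
Its symmetric part vanishes by skew-adjointness, after choosing $\nu$ so that both slots carry equal weight, e.g.\ by splitting $\Lam^{2\si}=\Lam^{\si}\cdot\Lam^{\si}$ directly from the start. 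What remains is a commutator $[\Lam^{\si},u\cdot\nabla]$-type term. Equivalently, the cleaner route is this: with $F_j=\Lam^{\si}\Delta_j\tht$,
\[
\lbn \Lam^{\si}\Delta_j(u\cdot\nabla\tht),F_j\rbn=\lbn[\Lam^{\si}\Delta_j,u^\ell]\bdy_\ell\tht,F_j\rbn ,
\]
since $\lbn u\cdot\nabla F_j,F_j\rbn=0$. Applying \eqref{commutator2:B}/\eqref{commutator2:A} to this commutator with $h=F_j$, or with $h=\tilde\Delta_jF_j$ placed in $\Hdot^{\al}$ after a shift, gives the bound $c_j\Sob{\tht}{\Hdot^{1+\be-2\al}}\Sob{\tht}{\Hdot^{\si+\al}}\,2^{j\cdot 0}\Sob{\Delta_j\tht}{\Hdot^{\si+\al}}$. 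Summing over $j$ then yields \eqref{lem1}.

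The main obstacle is matching parameters. Since $\be\ge 2\al+1$, the velocity has order $\be-1\ge 2\al$, so the commutator gains only one derivative while the critical norm sits at the high level $1+\be-2\al\in[2,3)$. The constraints $s<1$, $\nu\in(0,2)$, $s>\nu-1$ and $\si\in(2-\al,3-\al)$ must all be met simultaneously. If a single application fails, I would paraproduct-split $u\cdot\nabla\tht$ into low–high, high–low and high–high interactions:
\begin{itemize}
\item low–high: use \cref{lem:commutator2}, where the $i\sim j$ version is needed;
\item high–low and high–high: estimate directly with Bernstein (\cref{T:Bernstein}) and \cref{lem:commutator1} applied to $u=\Lam^{\be-2}\nabla^\perp\tht$, i.e.\ $[\Lam^{-s}\bdy_\ell,g]$ with $s=2-\be\in(0,1)$. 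This exploits the structure $u\cdot\nabla\tht=\bdy_\ell(\Lam^{\be-2}\bdy^\perp_\ell\tht\,\tht)$ and the commutator cancellation $\lbn \Lam^{\be-2}\nabla^\perp\tht\cdot\nabla\tht,\cdot\rbn$.
\end{itemize}
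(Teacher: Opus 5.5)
There is a genuine gap. Your reduction $\lbn \Lam^\si\Delta_j(u_\tht\cdot\nabla\tht),F_j\rbn=\lbn[\Lam^\si\Delta_j,u^\ell]\bdy_\ell\tht,F_j\rbn$, with $F_j=\Lam^\si\Delta_j\tht$, is correct. The next step, bounding this commutator by Lemma~\ref{lem:commutator2}, does not go through, and the obstruction is structural, not a parameter issue.

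\textbf{Why the commutator lemma does not apply.} With $h=F_j$ measured in $\Hdot^{\al}$ you need $\rho+\nu=\al$ and $s+\rho+1=\si$. In \eqref{commutator2:B}, the only branch that produces the norms $\Sob{\tht}{\Hdot^{1+\be-2\al}}\Sob{\tht}{\Hdot^{\si+\al}}$ is the second one with $\nu=2\al$. That forces $s=\si-1+\al>1$, since $\si>2-\al$, which is outside the allowed range $s\in[0,1)$. The first branch would require $\nu=1+2\al-\be\le 0$. Applying \eqref{commutator2:A} to $[\Lam^\si\bdy_\ell\Delta_j,u^\ell]\tht$ instead gives $s=\si-\al+\nu>1$.

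\textbf{The underlying reason.} Your commutator contains the piece in which the velocity carries all the derivatives: up to sign, $\lbn \bdy_\ell\tht\,\bdy_\ell^\perp\Lam^{\be-2}F_j,F_j\rbn$, with $\nabla\tht$ at frequency $2^k\ll 2^j$. Any bound that treats $u$ and $h$ as independent functions cannot control this piece. This covers both the commutator lemma and H\"older plus Bernstein in your paraproduct fallback. The best such a bound gives is $2^{(j-k)(\be-1-2\al)}\Sob{\Delta_k\tht}{\Hdot^{1+\be-2\al}}\Sob{\Delta_j\tht}{\Hdot^{\si+\al}}^2$. This is not summable over $k<j$, because $\be-1-2\al\ge 0$. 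Put differently, the high-frequency velocity carries $\be-1\ge 2\al$ more derivatives than $F_j$, which is more than the two dissipative factors $\Sob{F_j}{\Hdot^\al}$ can absorb. So treating the high--low interaction ``directly with Bernstein'' fails exactly in the fully nonlinear regime $\be\ge 2\al+1$.

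\textbf{The missing idea: symmetrization.} The same $F_j$ sits in both slots, and $T_\ell=\bdy_\ell^\perp\Lam^{\be-2}$ is skew-adjoint. With $a_\ell=\bdy_\ell\tht$ this gives $\lbn a_\ell T_\ell F_j,F_j\rbn=-\tfrac12\lbn[T_\ell,a_\ell]F_j,F_j\rbn$. The commutator now places the surplus derivative on $a_\ell$. Lemma~\ref{lem:commutator1} with $s=2-\be$, $\rho_1=2\al$, $\rho_2=\al$ then bounds it by $C\Sob{\tht}{\Hdot^{1+\be-2\al}}\Sob{F_j}{\Hdot^\al}^2$. This is the paper's $J_1$.

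\textbf{The remainder.} Write $\Lam_j^\si=\Delta_j\Lam^\si$. After subtracting both $(\Lam^\si_j u)\cdot\nabla\tht$ and $u\cdot\nabla\Lam^\si_j\tht$, the remainder still has to be rewritten using $\Lam^\si=-\Lam^{\si-2}\bdy_l\bdy_l$ and the product rule. This gives $-[\Lam_j^{\si-2}\bdy_l,\bdy_m\tht]\bdy_l u^m-[\Lam_j^{\si-2}\bdy_l,u^m]\bdy_m\bdy_l\tht$. Lemma~\ref{lem:commutator2} now applies with $s=\si-2+\al\in(0,1)$, $\rho=-\al$, $\nu=2\al$, using the first branch for the first commutator and the second branch for the second. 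This is the paper's $J_{21}+J_{22}$.

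Your proposal has the transport cancellation $\lbn u\cdot\nabla F_j,F_j\rbn=0$ but neither of these two steps. Without them the estimate does not close.
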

\begin{proof}
For convenience, we define the following notations
    \begin{align}\label{def:Lamj:notation}
         \Lam_i^s:=\Delta_i \Lam^s=\Lam^s \Delta_i.
    \end{align}
%    We treat the two cases $\si \in [1,2-\al)$ and $\si \in [2-\al,3-\al)$ separately.
%    Since $\nabla \cdot u_\tht =0$, we have for all $s\ge 0$,
%    \begin{align}\label{zero:term}
 %       \lbn u_{\tht}\cdot \nabla\Lam^s_j\tht,\Lam^s_j\tht\rbn_{L^{2}}=0.
%    \end{align}
%\subsubsection*{Case: $\si \in [1,2-\al)$} 
 %   Using \eqref{zero:term}, we have
 %   \begin{align*}
   %     \lbn \Delta_j (u_{\tht}\cdot \nabla\tht),\Delta_j(-\Delta)^{\si}\tht\rbn_{L^2}&=\lbn \Lam^\si_j (u_{\tht}\cdot \nabla\tht),\Lam^\si_j\tht\rbn_{L^2}-\lbn u_{\tht}\cdot \nabla \Lam^\si_j\tht,\Lam^\si_j\tht\rbn_{L^2}\\
 %       &=\lbn [\Lam^\si_j, u_\tht]\cdot \nabla \tht, \Lam^\si_j \tht \rbn=:I.
  %  \end{align*}
%Applying \cref{lem:commutator2} with $s=\si-1+\al,\, \rho=-\al,\, \nu=2\al$, we obtain
%\begin{align*}
%    |I|&\le C_j\Sob{u_\tht}{\Hdot^{1-2\al}}\Sob{\nabla \tht}{\Hdot^{\si-1+\al}}\Sob{\Lam_j^\si \tht}{\Hdot^{\al}}\\
%    &\le C_j\Sob{\tht}{\Hdot^{\be+1-2\al}}\Sob{\tht}{\Hdot^{\si+\al}}\Sob{\Lam^{\si+\al}_j \tht}{L^2}
%\end{align*}
%for $C_j \in \ell^2(\mathbb{Z})$.
%Upon summing over $j\in \mathbb{Z}$ and applying the Cauchy-Schwarz %inequality, we obtain the claimed estimate \eqref{lem1}.
%\subsubsection*{Case: $\si \in [2-\al,3-\al)$} 
Since \[\nabla^{\perp}\tht\cdot \nabla \tht=0,\]
we have
\begin{align*}
    \lbn \Delta_j (u_{\tht}\cdot \nabla\tht),\Delta_j (-\Delta)^{\si}\tht\rbn_{L^{2}}&=\left\{\lbn \Lam_j^{\si} (u_{\tht}\cdot \nabla\tht),\Lam_j^{\si}\tht\rbn_{L^{2}}-\lbn\Lam_j^{\si+\be-2}(\nabla^{\perp}\tht\cdot \nabla \tht),\Lam_j^{\si}\tht\rbn_{L^2}\right\}\\
    &=J_1+J_2+J_3,
\end{align*}
where 
\begin{align*}
    J_1&=\left\{\lbn  \nabla^{\perp} \Lam_j^{\si+\be-2}  \tht\cdot \nabla\tht,\Lam_j^{\si}\tht\rbn_{L^{2}}-\lbn  \nabla^{\perp}\Lam^{\be-2}\cdot(\Lam_j^{\si}{\tht} \nabla\tht),\Lam_j^{\si}\tht\rbn_{L^{2}}\right\},
\end{align*}
\begin{align*}
    J_2&=\left\{\lbn \Lam_j^{\si} (u_{\tht}\cdot \nabla\tht),\Lam_j^{\si}\tht\rbn_{L^{2}}-\lbn  \nabla^{\perp} \Lam_j^{\si+\be-2}  \tht\cdot \nabla\tht,\Lam_j^{\si}\tht\rbn_{L^{2}}\right\},
\end{align*}
and 
\begin{align*}
    J_3&=\left\{-\lbn\Lam_j^{\si+\be-2}(\nabla^{\perp}\tht\cdot \nabla \tht),\Lam_j^{\si}\tht\rbn_{L^2}+\lbn  \nabla^{\perp}\Lam^{\be-2}\cdot(\Lam_j^{\si}{\tht} \nabla\tht),\Lam_j^{\si}\tht\rbn_{L^{2}}\right\}.
\end{align*}
 Since $\si \ge 2-\alpha$, we have for any function $\psi$
\begin{align}\label{laplacian}
    \Lam^{\si} \psi=\Lam^{\si-2}(-\Delta)\psi=-(\Lam^{\si-2}\bdy_l)\bdy_l \psi
\end{align}
Since $\lbn u_\tht \cdot \nabla \Lam^\si_j \tht, \Lam^\si_j \tht\rbn=0$, we can express $J_2$ as
\[J_2=\lbn \Lam_j^{\si} (u_{\tht}\cdot \nabla\tht),\Lam_j^{\si}\tht\rbn_{L^{2}}-\lbn  \nabla^{\perp} \Lam_j^{\si+\be-2}  \tht\cdot \nabla\tht,\Lam_j^{\si}\tht\rbn_{L^{2}}-\underbrace{\lbn u_{\tht}\cdot \nabla\Lam^\si_j\tht,\Lam^\si_j\tht\rbn_{L^{2}}}_{=0}\]
Applying \eqref{laplacian} and the product rule, we obtain
\begin{align}\label{productrule:split}
    J_2&=-\lbn \Lam^{\si-2}_j\bdy_{l}(\nabla^{\perp}\Lam^{\be-2}\bdy_{l}\tht\cdot \nabla \tht) ,\Lam^{\si}_j\tht\rbn_{L^{2}}+\lbn (\nabla^{\perp}\Lam^{\be-2}\Lam^{\si-2}\bdy_l \bdy_l\tht \cdot\nabla \tht),\Lam^{\si}_j \tht \rbn_{L^{2}} \notag\\
    &-\lbn \Lam^{\si-2}_j\bdy_l(\nabla^{\perp}\Lam^{\be-2}\tht\cdot \nabla\bdy_l \tht),\Lam^{\si}_j \tht\rbn_{L^{2}}+\lbn (\nabla^{\perp}\Lam^{\be-2}\tht\cdot \Lam^{\si-2}_j\bdy_l \bdy_l \tht),\Lam^{\si}_j\rbn_{L^{2}} \notag \\
    &=-\lbn [\Lam^{\si-2}_j \bdy_l,\bdy_\ell]\bdy_\ell^{\perp}\bdy_l \Lam^{\be-2}\tht,\Lam^{\si}_j \tht\rbn_{L^{2}}-\lbn [\Lam^{\si-2}_j \bdy_l,\bdy_\ell^{\perp}\Lam^{\be-2}\tht]\bdy_{\ell}\bdy_l \tht,\Lam^{\si}_j\rbn_{L^{2}}\notag\\
    &=:\, J_{21}+J_{22}.
\end{align}

Similarly, using the fact that $\lbn  (\nabla^{\perp}\tht\cdot\nabla\Lam^{\si+\frac{\be-2}{2}}_j\tht,\Lam^{\si+\frac{\be-2}{2}}_j\tht\rbn_{L^{2}}=0$, $J_3$ can be expressed as 
\[J_3=-\lbn\Lam_j^{\si+\be-2}(\nabla^{\perp}\tht\cdot \nabla \tht),\Lam_j^{\si}\tht\rbn_{L^2}+\lbn  \nabla^{\perp}\Lam^{\be-2}\cdot(\Lam_j^{\si}{\tht} \nabla\tht),\Lam_j^{\si}\tht\rbn_{L^{2}}+\underbrace{\lbn  (\nabla^{\perp}\tht\cdot\nabla\Lam^{\si+\frac{\be-2}{2}}_j\tht,\Lam^{\si+\frac{\be-2}{2}}_j\tht\rbn_{L^{2}}}_{=0}\]
Similar to \eqref{productrule:split}, applying \eqref{laplacian} and the product rule, we obtain
\begin{align}
    J_3&=-\lbn [\Lam^{\si-2}_j\bdy_l,\bdy_\ell \tht]\bdy_\ell^{\perp}\bdy_l \tht,\Lam^{\si+\be-2}_j \tht \rbn_{L^{2}}-\lbn [\Lam_j^{\si-2+\frac{\be-2}{2}}\bdy_l,\bdy_\ell^{\perp}\tht]\bdy_\ell\bdy_l\tht,\Lam^{\si+\frac{\be-2}{2}}_j \tht\rbn_{L^{2}}\notag\\
    &=:\,J_{31}+J_{32}.
\end{align}
We write
\begin{align*}
J_1=-\lbn [\bdy_{\ell}^{\perp}\Lam^{\be-2}, \bdy_{\ell}\tht]\Lam^{\si}_{j}\tht,\Lam^{\si}_{j}\tht \rbn
\end{align*}
and apply \cref{lem:commutator1} with $s=2-\be$, $\rho_1=2\al$ and $\rho_2=\al$, to obtain
\begin{align}
    |J_1|&\le C\Sob{[\bdy_{\ell}^{\perp}\Lam^{\be-2}, \bdy_{\ell}\tht]\Lam^{\si}_{j}\tht}{\Hdot^{-\al}}\Sob{\Lam^\si_j}{\Hdot^\al} \notag\\&\le C\Sob{\bdy_\ell \tht}{\Hdot^{\be-2\al}}\Sob{\Lam^\si_j}{\Hdot^\al}^2.
\end{align}
Summing over $j$ and using \eqref{norm:equivalence}, we obtain 
\begin{align}\label{J1:final:estimate}
    \sum_{j\in \mathbb{Z}} |J_1|\le C\Sob{\tht}{\dot{H}^{1+\be-2\al}}\Sob{\tht}{\dot{H}^{\si+\al}}^2.
\end{align}
Applying \cref{lem:commutator2} with $s=\si-2+\al$, $\rho=-\al$, and $\nu=2\al$, we obtain 
\begin{align}\label{J21}
    |J_{21}|&\le Cc_j\Sob{\bdy_\ell^{\perp}\bdy_l\Lam^{\be-2}\tht}{\Hdot^{1-2\al}}\Sob{\bdy_\ell \tht}{\Hdot^{\si-1+\al}}\Sob{\Lam_j^\si \tht}{\Hdot^\al}\notag \\
    &\le Cc_j\Sob{\tht}{\Hdot^{\be+1-2\al}}\Sob{\tht}{\Hdot^{\si+\al}}\Sob{\Lam_j^\si\tht}{\Hdot^{\al}}.
\end{align}
Applying \cref{lem:commutator2} with $s=\si-2+\al$, $\rho=-\al$, and $\nu=2\al$, we obtain 
\begin{align}\label{J22}
    |J_{22}|&\le Cc_j\Sob{\bdy_\ell \bdy_l \tht}{\Hdot^{\si-2+\al}}\Sob{\bdy_\ell\Lam^{\be-2}\tht}{\Hdot^{2-2\al}}\Sob{\Lam^{\si}_j\tht}{\Hdot^\al}\notag\\
    &\le \Sob{\tht}{\Hdot^{\si+\al}}\Sob{\tht}{\Hdot^{\be+1-2\al}}\Sob{\Lam_j^\si\tht}{\Hdot^{\al}}.
\end{align}
Applying \cref{lem:commutator2} with $s=\si-2+\al$, $\rho=-\al$, and $\nu=2-\be+2\al$, we obtain 
\begin{align}\label{J31}
    |J_{31}|&\le Cc_j \Sob{\bdy_\ell^{\perp}\bdy_l \tht}{\Hdot^{\be-1-2\al}}\Sob{\bdy_\ell \tht}{\Hdot^{\si-2+\al+1}}\Sob{\Lam^{\si+\be-2}_j \tht}{\Hdot^{2-\be+\al}}\notag\\
    &\le Cc_j \Sob{\tht}{\Hdot^{\be+1-2\al}}\Sob{\tht}{\Hdot^{\si+\al}}\Sob{\Lam^{\si+\be-2}_j \tht}{\Hdot^{2-\be+\al}}.
\end{align}
Applying \cref{lem:commutator2} with $s=\si-2+\al$, $\rho=\frac{\be-2}{2}-\al$, and $\nu=2-\be+2\al$, we obtain 
\begin{align}\label{J32}
    |J_{32}|&\le Cc_j \Sob{\bdy_\ell \bdy_l \tht}{\Hdot^{\si-2+\al}}\Sob{\bdy_\ell^{\perp}\tht}{\Hdot^{\be-2\al}}\Sob{\Lam^{\si+\frac{\be-2}{2}}\tht}{\Hdot^{\frac{2-\be}{2}+\al}}\notag\\
    &\le Cc_j \Sob{\tht}{\Hdot^{\si+\al}}\Sob{\tht}{\Hdot^{\be+1-2\al}}\Sob{\Lam^{\si+\frac{\be-2}{2}}\tht}{\Hdot^{\frac{2-\be}{2}+\al}}.
\end{align}

Finally, we sum in $j\in \mathbb{Z}$, apply the Cauchy-Schwarz inequality in \eqref{J21}-\eqref{J32}, and use \eqref{norm:equivalence} to obtain
\begin{align}\label{J2J3:final:estimate}
    \sum_{j\in \mathbb{Z}}\left(|J_{21}|+|J_{22}|+|J_{31}|+|J_{32}|\right)\le C\Sob{\tht}{\dot{H}^{1+\be-2\al}}\Sob{\tht}{\dot{H}^{\si+\al}}^2.
\end{align}
Collecting the estimates \eqref{J1:final:estimate} and \eqref{J2J3:final:estimate} completes the proof.
\end{proof}
The following Lemma follows directly from \cref{lem:main} by taking $\si=1+\be-2\al$.
%We require the particular case of Lemma 2.1 obtained by taking $\si=1+\be-2\al$.
\begin{Lem}\label[Lem]{lem:main:particular}
    Let $\al \in (0,1)$, $\be\in [2\alpha+1,2)$, and $\si \in (2-\al,3-\al)$. For $\tht\in \Hdot^{1+\be-2\al}(\mathbb{R}^2)\cap \dot{H}^{1+\be-\al}(\mathbb{R}^2)$, we have the following inequality
    \begin{align}\label{lem2}
    \sum_{j\in \mathbb{Z}}|\lbn \Delta_j (u_{\tht}\cdot \nabla\tht),\Delta_j(-\Delta)^{1+\be-2\al}\tht\rbn_{L^2}|\le C\Sob{\tht}{\dot{H}^{1+\be-2\al}}\Sob{\tht}{\dot{H}^{1+\be-\al}}^2,
    \end{align}
    where $C=C(\al, \be)$ is a positive constant.
\end{Lem}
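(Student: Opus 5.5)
The plan is to apply \cref{lem:main} directly with the specific choice $\si=1+\be-2\al$. The work is to check that this choice satisfies the hypotheses of \cref{lem:main}, and that both sides of \eqref{lem1} turn into the corresponding sides of \eqref{lem2}.

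\textbf{Step 1: the range condition.} We need $1+\be-2\al\in(2-\al,3-\al)$.
\begin{itemize}
\item The lower bound $1+\be-2\al>2-\al$ is equivalent to $\be>1+\al$. This follows from $\be\ge 2\al+1=1+\al+\al>1+\al$, since $\al>0$.
\item The upper bound $1+\be-2\al<3-\al$ is equivalent to $\be<2+\al$. This follows from $\be<2$.
\end{itemize}
So the fixed value $\si=1+\be-2\al$ lies in the admissible interval. The extra hypothesis ``$\si\in(2-\al,3-\al)$'' written in the statement of \cref{lem:main:particular} plays no role beyond this check.

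\textbf{Step 2: matching the two sides.} With this choice we have $\si+\al=1+\be-\al$. Hence the right side of \eqref{lem1} becomes $C\Sob{\tht}{\Hdot^{1+\be-2\al}}\Sob{\tht}{\Hdot^{1+\be-\al}}^2$. The left side is literally the left side of \eqref{lem2}. The constant $C(\al,\be,\si)$ becomes a constant depending only on $(\al,\be)$.

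\textbf{Step 3: the function space (the only point needing care).} \cref{lem:main} is stated for $\tht\in H^{1+\be-2\al}\cap\Hdot^{\si+\al}$, whereas \cref{lem:main:particular} assumes only the homogeneous space $\Hdot^{1+\be-2\al}$. I would handle this by inspecting the proof of \cref{lem:main}. Every estimate there ($J_1$ via \cref{lem:commutator1}, and $J_{21},J_{22},J_{31},J_{32}$ via \cref{lem:commutator2}) uses only homogeneous norms $\Sob{\tht}{\Hdot^{1+\be-2\al}}$ and $\Sob{\tht}{\Hdot^{\si+\al}}$, together with \eqref{norm:equivalence}. The $L^2$ part of the inhomogeneous norm is never invoked, so the argument goes through verbatim under the homogeneous hypothesis.

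If one prefers not to reopen that proof, an alternative is a density/approximation argument. One would approximate $\tht$ by functions with Fourier support away from the origin, which lie in $H^{1+\be-2\al}$. Both sides of \eqref{lem2} are continuous in the relevant homogeneous norms, so the inequality passes to the limit. I expect the direct inspection to be the simpler route.
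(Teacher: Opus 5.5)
Your proposal is correct and is the paper's argument: the paper specializes \cref{lem:main} to $\si=1+\be-2\al$, and your range check ($\be\ge 1+2\al>1+\al$ and $\be<2<2+\al$) together with $\si+\al=1+\be-\al$ supplies the verification the paper leaves implicit. Your Step~3 is sound but not needed under the paper's own definition \eqref{def:hom:Sob:norm}: because it requires $\hat f\in L^2_{loc}$, any $\tht\in\Hdot^{1+\be-2\al}$ already has $\hat\tht\in L^2$ (split into $|\xi|\le 1$ and $|\xi|>1$), so the hypothesis $\tht\in H^{1+\be-2\al}\cap\Hdot^{\si+\al}$ of \cref{lem:main} holds as stated.
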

We recall the following classical result from \cite{Brezisbook2011}:
\begin{Lem}[\cite{Brezisbook2011}]\label[Lem]{lem:Hilbert:conv}
    Let $H$ be a Hilbert space and $\{x_n\}_{n\in \mathbb{N}}$ be a bounded sequence in $H$ such that
    \begin{align*}
        x_n \rightharpoonup x,\quad \text{and}\,\quad \limsup_{n\to \infty}\Sob{x_n}{}\le \Sob{x}{},
    \end{align*}
then
\[\lim_{n\to \infty}\Sob{x_n-x}{}=0.\]
\end{Lem}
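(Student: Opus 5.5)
The plan is to expand the square of the distance using the inner product of $H$, and to control each resulting term with one of the two hypotheses. For every $n$ we have
\begin{align*}
\Sob{x_n-x}{}^2=\Sob{x_n}{}^2-2\,\mathrm{Re}\,\lbn x_n,x\rbn+\Sob{x}{}^2 .
\end{align*}
The middle term is handled by weak convergence. Since $y\mapsto \lbn y,x\rbn$ is a continuous linear functional on $H$ (Riesz representation), $x_n\rightharpoonup x$ gives $\lbn x_n,x\rbn\to\lbn x,x\rbn=\Sob{x}{}^2$. Hence $\mathrm{Re}\,\lbn x_n,x\rbn\to\Sob{x}{}^2$.

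Next we take the $\limsup$ of the identity. The first term is controlled by the hypothesis $\limsup_{n\to\infty}\Sob{x_n}{}\le\Sob{x}{}$. Because $t\mapsto t^2$ is increasing on $[0,\infty)$, this yields $\limsup_{n\to\infty}\Sob{x_n}{}^2\le\Sob{x}{}^2$. Combining the three terms gives
\begin{align*}
\limsup_{n\to\infty}\Sob{x_n-x}{}^2\le \Sob{x}{}^2-2\Sob{x}{}^2+\Sob{x}{}^2=0 .
\end{align*}
Since norms are nonnegative, the limit exists and equals $0$, which is the claim.

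There is no real obstacle here. The only point requiring care is to use the $\limsup$ of the sum, bounded by the sum of the $\limsup$ of $\Sob{x_n}{}^2$ and the genuine limits of the other two terms, rather than assuming $\Sob{x_n}{}$ itself converges. As a side remark, weak lower semicontinuity of the norm would give $\Sob{x}{}\le\liminf\Sob{x_n}{}$, so in fact $\Sob{x_n}{}\to\Sob{x}{}$. This is not needed for the argument. The boundedness assumption is likewise automatic from weak convergence via the uniform boundedness principle, and is not used beyond what has been stated.
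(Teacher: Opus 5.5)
Your argument is correct. The paper does not prove this lemma; it only cites Brezis's book.

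There the result is stated more generally, for uniformly convex Banach spaces, and the argument goes differently. One normalizes $x_n$ and $x$ onto the unit sphere. Weak lower semicontinuity of the norm then forces $\|(y_n+y)/2\|\to 1$, and uniform convexity turns this into $\|y_n-y\|\to 0$.

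Your route is the standard direct proof in the Hilbert case. You expand $\|x_n-x\|^2$ through the inner product, send the cross term to $\|x\|^2$ by weak convergence, and bound $\limsup\|x_n\|^2$ by the hypothesis. It is shorter and needs nothing beyond Cauchy--Schwarz. However, it relies on the norm in the hypothesis $\limsup\|x_n\|\le\|x\|$ being the one induced by the inner product. Brezis's version works for any uniformly convex norm, for example $L^p$ with $1<p<\infty$.

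For the paper's use of the lemma, in $H^{1+\be-2\al}$ normed by $\|f\|_{L^2}^2+\|f\|_{\Hdot^{1+\be-2\al}}^2$, the norm is an inner-product norm, so your version suffices.

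One small point: continuity of $y\mapsto\langle y,x\rangle$ follows from Cauchy--Schwarz. The Riesz representation theorem is the converse statement and is not needed here.
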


\section{Proof of \cref{thm:main:beta} i)
}
We now carry out the proof of \cref{thm:main:beta}. 
Let $T^*<\infty$ be the maximal time of existence for the solution $\tht$ of \eqref{gSQG} as obtained in  \cref{thm:main:beta:recall}. We show that if 
\begin{align}\label{blowup:finite}
    \int_0^{T^*}\Sob{\Lam^\al \tht(s)}{H^{1+\be-2\al}}^2\,ds<\infty,
\end{align}
then we arrive at a contradiction. Assuming \eqref{blowup:finite} is true, then for each $\eps>0$, there exists a time $T_0\in (0,T^{*})$ satisfying
\begin{align}\label{blowup:finite:epsilon}
    \int_{T_0}^{T^*}\Sob{\Lam^\al \tht(s)}{H^{1+\be-2\al}}^2\,ds<\eps.
\end{align}
The proof is divided into three steps.
\subsection{Step 1}
First, we claim that \eqref{blowup:finite:epsilon} implies that $\Sob{\tht(t)}{H^{1+\be-2\al}}$ is bounded for all $t\in [0,T^{*})$. To prove the claim, we begin by taking the $L^2-$ inner product in \eqref{gSQG} and obtain
\begin{align}\label{L2}
    \frac{1}{2}\frac{d}{dt}\Sob{\tht(t)}{L^2}^2+\Sob{\Lam^{\al}\tht(t)}{L^2}^2=0.
\end{align}
Applying the Littlewood-Paley dyadic operator $\Delta_j$ to \eqref{gSQG} and taking the inner product with $\Delta_j (-\Delta)^{1+\be-2\al} \tht$, we obtain
\begin{align}\label{Lamj:critical}
    \frac{1}{2}\frac{d}{dt}\Sob{\Lam_j^{1+\be-2\al}\tht(t)}{L^2}^2+\Sob{\Lam^{1+\be-\al}_j\tht(t)}{L^2}^2 \le|\lbn \Delta_j(u_\tht\cdot \nabla \tht),\Delta_j (-\Delta)^{1+\be-2\al} \tht\rbn_{L^2}|,\quad \forall t\in[0,T^{*}),
\end{align}
where we employed the notation defined in \eqref{def:Lamj:notation}.
Now, let $t_1$ and $t_2$ be such that $T_0\le t_1\le t_2<T^*$. We integrate the above inequality over $[T_0, t_1]$ to obtain
\begin{align}\label{Hdot:critical}
    &\Sob{\Lam_j^{1+\be-2\al}\tht(t_1)}{L^2}^2+2\int_{T_0}^{t_1}\Sob{\Lam_j^{1+\be-\al}\tht(s)}{L^2}^2\,ds\notag \\&\le \Sob{\Lam_j^{1+\be-2\al}\tht(T_0)}{L^2}^2+2\int_{T_0}^{t_1} |\lbn \Delta_j(u_\tht\cdot \nabla \tht),\Delta_j (-\Delta)^{1+\be-2\al} \tht\rbn_{L^2}|\,ds.
\end{align}
Summing over $j\in \mathbb{Z}$ and applying \eqref{norm:equivalence} followed by an application of \cref{lem:main:particular} results in
\begin{align}\label{H:critical}
    \Sob{\tht(t_1)}{\Hdot^{1+\be-2\al}}^2+2\int_{T_0}^{t_1}\Sob{\tht}{\Hdot^{1+\be-\al}}^2\,ds\le \Sob{\tht(T_0)}{\Hdot^{1+\be-2\al}}^2+C(\al,\be)\int_{T_0}^{t_1}\Sob{\tht}{\dot{H}^{1+\be-2\al}}\Sob{\tht}{\dot{H}^{1+\be-\al}}^2\,ds.
\end{align}
We integrate \eqref{L2} over $[T_0, t_1]$, add to  the above inequality and use the fact that $1+\be-2\al\ge 0$ to obtain
\begin{align*}
    \Sob{\tht(t_1)}{H^{1+\be-2\al}}^2+2\int_{T_0}^{t_1}\Sob{\Lam^{\al}\tht}{H^{1+\be-2\al}}^2\,ds\le \Sob{\tht(T_0)}{H^{1+\be-2\al}}^2+C(\al,\be)\int_{T_0}^{t_1}\Sob{\tht}{{H}^{1+\be-2\al}}\Sob{\Lam^{\al}\tht}{{H}^{1+\be-2\al}}^2\,ds.
\end{align*}
Since $\tht\in C([T_0, t_2];H^{1+\be-2\al})$, it follows that
\begin{align*}
    &\Sob{\tht(t_1)}{H^{1+\be-2\al}}^2+2\int_{T_0}^{t_1}\Sob{\Lam^{\al}\tht}{H^{1+\be-2\al}}^2\,ds \\&\le \Sob{\tht(T_0)}{H^{1+\be-2\al}}^2+C(\al,\be)\sup_{s\in[T_0, t_2]}\left\{\Sob{\tht(s)}{H^{1+\be-2\al}}\right\}\times \int_{T_0}^{T^*}\Sob{\Lam^{\al}\tht}{H^{1+\be-\al}}^2\,ds.
\end{align*}
We take $\epsilon=C(\al,\be)^{-1}$ in \eqref{blowup:finite:epsilon} and apply the Young's inequality to obtain
\begin{align*}
    \Sob{\tht(t_1)}{H^{1+\be-2\al}}^2 \le \Sob{\tht(T_0)}{H^{1+\be-2\al}}^2+\frac{1}{2}+\frac{1}{2}\sup_{s\in[T_0, t_2]}\left\{\Sob{\tht(s)}{H^{1+\be-2\al}}^2\right\}.
\end{align*}
Taking supremum over $[T_0, t_2]$ on the left side, we obtain
\begin{align*}
    \sup_{s\in[T_0, t_2]}\left\{\Sob{\tht(s)}{H^{1+\be-2\al}}^2\right\}\le 2\Sob{\tht(T_0)}{H^{1+\be-2\al}}^2+1.
\end{align*}
Since $\tht \in C([0,T_0];H^{1+\be-2\al})$, we have 
\[\sup_{s\in[0,T_0]}\left\{\Sob{\tht(s)}{H^{1+\be-2\al}}\right\}<\infty.\]
Denote by
\[K_{T_0}=\max \left\{\left(2\Sob{\tht(T_0)}{H^{1+\be-2\al}}^2+1\right)^{\frac{1}{2}}, \sup_{s\in[0,T_0]}\left\{\Sob{\tht(s)}{H^{1+\be-2\al}}\right\}\right\}.\]
Then, we have
\begin{align}\label{Hcritical}
    \Sob{\tht(t)}{H^{1+\be-2\al}}\le K_{T_0},\quad \forall t\in [0,T^*).
\end{align}
\subsection{Step 2} Next, we claim that there exists a function $\tilde{\tht}\in H^{1+\beta-2\al}$ such that 
\begin{align*}
    \lim_{t \uparrow T^*}\Sob{\tht(t)-\tilde{\tht}}{L^2}=0.
\end{align*}
We consider a sequence $\{\lambda_n\}_{n\in \mathbb{N}}$ such that $\lambda_n \in [0,T^*)$ for all $n\in\mathbb{N}$ and
$\lim_{n\to \infty}\lambda_n=T^*.$
We show that $\{\lambda_n\}$ is Cauchy is $L^2$. Integrate \eqref{gSQG} over the intervals $[0,\lambda_n]$ and $[0,\lambda_m]$ and take the difference of the resulting equations to obtain
\begin{align}
    \tht(\lambda_n)-\tht(\lambda_m)=-\int_{\lambda_m}^{\lambda_n}((-\Delta)^{\al}\tht)(s)\,ds-\int_{\lambda_m}^{\lambda_n}(u_\tht \cdot \nabla \tht)(s)\,ds.
\end{align}
Taking $L^2-$ norm on both sides and using Plancherel's identity, we have
\begin{align}
    \Sob{\tht(\lambda_n)-\tht(\lambda_m)}{L^2}\le \int_{\lambda_m}^{\lambda_n}\Sob{((-\Delta)^{\al}\tht)(s)}{L^2}\,ds+\int_{\lambda_m}^{\lambda_n}\Sob{(u_\tht \cdot \nabla \tht)(s)}{L^2}\,ds.
\end{align}
Since $\nabla^{\perp}\tht\cdot \nabla \tht=0$, we have
\begin{align}\label{commutator:decomposition}
    u_\tht \cdot \nabla \tht=\nabla^{\perp}\Lam^{\be-2}\tht\cdot \nabla \tht+\Lam^{\be-2}(\nabla^{\perp}\tht\cdot \nabla \tht)=[\nabla^{\perp}\Lam^{\be-2}\cdot,\nabla \tht]\tht.
\end{align}
Applying \cref{lem:commutator1} with $s=2-\be$, $\rho_1=\rho_2=2\al$, and using the fact that $2\al\le 1+\be-2\al$, we obtain
\begin{align}
    \Sob{\tht(\lambda_n)-\tht(\lambda_m)}{L^2}&\le \int_{\lambda_m}^{\lambda_n}\Sob{\tht(s)}{\Hdot^{2\al}}\,ds+C(\al,\be)\int_{\lambda_m}^{\lambda_n}\Sob{\nabla \tht}{\Hdot^{\be-2\al}}\Sob{\tht}{\Hdot^{2\al}}\,ds\notag\\&\le \int_{\lambda_m}^{\lambda_n}\Sob{\tht(s)}{\Hdot^{2\al}}\,ds+C(\al,\be)\int_{\lambda_m}^{\lambda_n}\Sob{\tht}{\Hdot^{1+\be-2\al}}\Sob{\tht}{\Hdot^{2\al}}\,ds \notag\\&\le
    \left(\int_{\lambda_m}^{\lambda_n}\Sob{\tht(s)}{H^{1+\be-2\al}}\,ds+C(\al,\be)\int_{\lambda_m}^{\lambda_n}\Sob{\tht(s)}{H^{1+\be-2\al}}^2\,ds\right)\notag\\
    &\le \left(K_{T_0}+C(\al,\be)K_{T_0}^2\right)|\lam_n-\lam_m|.
\end{align}
Since $\lam_n \to T^*$, we obtain that $\{\tht(\lam_n)\}$ is Cauchy in $L^2$. As a result, there exists $\tilde{\tht} \in L^2$ such that 
\begin{align}\label{lim:L2}
    \lim_{n\to \infty}\Sob{\tht(\lam_n)-\tilde{\tht}}{L^2}=0.
\end{align}
Additionally, from \eqref{Hcritical}, we deduce that $\{\tht(\lam_n)\}$ is a bounded sequence in $H^{1+\be-2\al}$. Therefore, we can extract a subsequence of $\{\tht(\lam_n)\}$, denoted $\{\tht(\lam_{n_l})\}$ such that
\begin{align}\label{lam:subsequence}
    \tht(\lam_{n_l})\rightharpoonup \bar{\tht}\quad \text{in}\quad H^{1+\be-2\al}.
\end{align}
From this, we deduce that $\tht(\lam_n)\rightharpoonup \bar{\tht}$ in $L^2$ and hence $\tilde{\tht}=\bar{\tht}$.
It remains to prove that
\[\lim_{t \uparrow T^*}\Sob{\tht(t)-\tilde{\tht}}{H^{1+\be-2\al}}=0.\] We argue by contradiction. Assume that the above limit does not hold. Then there exists $\epsilon>0$ and an sequence $\{t_n\}$ such that $t_n\in ((1-T^*/n,T^*)$ for all $n\in \mathbb{N}$ and 
\begin{align}\label{tn:Hcritical}
    \Sob{\tht(t_n)-\tilde{\tht}}{H^{1+\be-2\al}}\ge \epsilon,\quad \forall n\in \mathbb{N}.
\end{align}
Similar to \eqref{lam:subsequence}, we can extract a subsequence of $\{\tht(t_n)\}$, denoted $\{\tht(t_{n_l})\}$ such that
\begin{align}\label{weak:limit:theta:tnl}
    \tht(t_{n_l})\rightharpoonup \tilde{\tht}\quad \text{in}\quad H^{1+\be-2\al}.
\end{align}
Using interpolation, Plancherel's identity and \eqref{Hcritical}, we have for all $r\in [0,1+\be-2\al)$
\begin{align}\label{Hr:interpolation}
    \Sob{\tht(t)-\tilde{\tht}}{H^r}&\le C(\al,\be,\mu)\Sob{\tht(t)-\tilde{\tht}}{L^2}^{\mu}\Sob{\tht(t)-\tilde{\tht}}{H^{1+\be-2\al}}^{1-\mu}\notag\\
    &\le C(\al,\be,\mu)\Sob{\tht(t)-\tilde{\tht}}{L^2}^{\mu}\left(\Sob{\tht(t)}{H^{1+\be-2\al}}+\Sob{\tilde{\tht}}{H^{1+\be-2\al}}\right)^{1-\mu}\notag\\
    &\le C(\al,\be,\mu)\Sob{\tht(t)-\tilde{\tht}}{L^2}^{\mu}\left(K_{T_0}+\Sob{\tilde{\tht}}{H^{1+\be-2\al}}\right)^{1-\mu},
\end{align}
where $\mu\in (0,1]$. From \eqref{lim:L2} and \eqref{Hr:interpolation}, we obtain 
\begin{align*}
     \lim_{t \uparrow T^*}\Sob{\tht(t)-\tilde{\tht}}{H^r}=0, \quad \forall r\in [0,1+\be-2\al).
\end{align*}
\subsection{Step 3} Now we prove the claim stated in \eqref{regularity:criterion}. Let $\{\gamma_n\}_{n\in\mathbb{N}}$ be a sequence in $ (2-\al,1+\be-2\al)$ such that $\lim_{n\to \infty}\gamma_n=1+\be-2\al$.
Applying the Littlewood-Paley dyadic operator $\Delta_j$ to \eqref{gSQG} and taking the inner product with $\Delta_j (-\Delta)^{\gamma_n} \tht$, we obtain
\begin{align*}
    \frac{1}{2}\frac{d}{dt}\Sob{\Lam_j^{\gamma_n}\tht(t)}{L^2}^2+\Sob{\Lam^{\gamma_n+\al}_j\tht(t)}{L^2}^2 \le|\lbn \Delta_j(u_\tht\cdot \nabla \tht),\Delta_j (-\Delta)^{\gamma_n} \tht\rbn_{L^2}|.
\end{align*}
Now let $t_3$ and $t_4$ be such that $0\le t_3\le t_4<T^*$. We integrate the above inequality over $[t_3, t_4]$ to obtain
\begin{align*}
    \Sob{\Lam_j^{\gamma_n}\tht(t_4)}{L^2}^2+2\int_{t_3}^{t_4}\Sob{\Lam_j^{\gamma_n+\al}\tht(s)}{L^2}^2\,ds\le \Sob{\Lam_j^{\gamma_n}\tht(t_3)}{L^2}^2+2\int_{t_3}^{t_4} |\lbn \Delta_j(u_\tht\cdot \nabla \tht),\Delta_j (-\Delta)^{\gamma_n} \tht\rbn_{L^2}|\,ds.
\end{align*}
Summing over $j\in \mathbb{Z}$ and applying \eqref{norm:equivalence} followed by an application of \cref{lem:main} with $\si=\gamma_n$ gives us
\begin{align*}
    \Sob{\tht(t_4)}{\Hdot^{\gamma_n}}^2+2\int_{t_3}^{t_4}\Sob{\tht}{\Hdot^{\gamma_n+\al}}^2\,ds\le \Sob{\tht(t_3)}{\Hdot^{\gamma_n}}^2+C(\al,\be)\int_{t_3}^{t_4}\Sob{\tht}{\dot{H}^{\gamma_n}}\Sob{\tht}{\dot{H}^{\gamma_n+\al}}^2\,ds.
\end{align*}
We integrate \eqref{L2} over $[t_3, t_4]$ and then add to the above inequality, and use the fact that $\gamma_n < 1+\be-2\al$ to obtain
\begin{align*}
    \Sob{\tht(t_3)}{H^{\gamma_n}}^2+2\int_{t_3}^{t_4}\Sob{\Lam^{\al}\tht}{H^{\gamma_n}}^2\,ds\le \Sob{\tht(t_4)}{H^{\gamma_n}}^2+C(\al,\be)\int_{t_3}^{t_4}\Sob{\tht}{{H}^{1+\be-2\al}}\Sob{\Lam^{\al}\tht}{{H}^{1+\be-2\al}}^2\,ds.
\end{align*}
Taking the limit as $t_4 \to T^*$ in the above inequaliy, we obtain
\begin{align}\label{H:gamman}
    \Sob{\tht(t_3)}{H^{\gamma_n}}^2\le \Sob{\tilde{\tht}}{H^{\gamma_n}}^2+C(\al,\be)K_{T_0}\int_{t_3}^{T^*}\Sob{\Lam^{\al}\tht}{{H}^{1+\be-2\al}}^2\,ds
\end{align}
Since $\lim_{n\to\infty}\gamma_n=1+\be-2\al$, we apply the dominated convergence theorem and deduce that for each $t\in [0,T^*)$, we have
\begin{align*}
  \lim_{n\to \infty}\Sob{\tht(t)}{H^{\gamma_n}}=\Sob{\tht(t)}{H^{1+\be-2\al}}. 
\end{align*}
Similarly, we also have
\[\lim_{n\to \infty}\Sob{\tilde{\tht}}{H^{\gamma_n}}=\Sob{\tilde{\tht}}{H^{1+\be-2\al}}.\]
Now we pass to the limit, as $n\to \infty$, in \eqref{H:gamman} to obtain
\begin{align*}
    \Sob{\tht(t)}{H^{1+\be-2\al}}^2\le \Sob{\tilde{\tht}}{H^{1+\be-2\al}}^2+C(\al,\be)K_{T_0}\int_{t}^{T^*}\Sob{\Lam^{\al}\tht}{{H}^{1+\be-2\al}}^2\,ds,\quad \forall t \in [0,t^*).
\end{align*}
In particular, we have
\begin{align*}
    \Sob{\tht(t_{n_l})}{H^{1+\be-2\al}}^2\le \Sob{\tilde{\tht}}{H^{1+\be-2\al}}^2+C(\al,\be)K_{T_0}\int_{t_{n_l}}^{T^*}\Sob{\Lam^{\al}\tht}{{H}^{1+\be-2\al}}^2\,ds,\quad \forall t \in [0,t^*),\quad \forall l \in \mathbb{N}.
\end{align*}
Taking $\limsup_{l\to \infty}$ in the above inequality, we obtain
\begin{align}\label{lam:subsequence}
    \limsup_{l\to \infty}\Sob{\tht(t_{n_l})}{H^{1+\be-2\al}}\le \Sob{\tilde{\tht}}{H^{1+\be-2\al}}.
\end{align}
By \eqref{weak:limit:theta:tnl}, \eqref{lam:subsequence}, and \cref{lem:Hilbert:conv}, we deduce that
\begin{align*}
    \lim_{l\to \infty}\Sob{\tht(t_{n_l})-\tilde{\tht}}{H^{1+\be-2\al}}=0,
\end{align*}
which contradicts \eqref{tn:Hcritical}. Therefore, we obtain
\begin{align*}
    \lim_{t \uparrow T^*}\Sob{\tht(t)-\tilde{\tht}}{H^{1+\be-2\al}}=0.
\end{align*}
Now we initialize \eqref{gSQG} with data $\tilde{\tht}$. By \cref{thm:main:beta:recall}, there exists a time $T_1\in (0,\infty]$ and a unique solution $\tht_1(\cdot;\tilde{\tht})\in C([0,T_1];H^{1+\be-2\al})$. 
Now, if we define the function
\begin{align*}
    \tht_2=\begin{cases}
        \tht(t),\quad &t\in[0,T^*);\\
        \tht_1(t-T^*) &t\in[T^*, T_1+T^*).
    \end{cases}
\end{align*}
Then, it is straightforward to verify that $\tht_2$ is a solution of \eqref{gSQG} lying in $C([0,T_1+T^*);H^{1+\be-2\al})$ which contradicts the assumption that $T^*$ is the maximal time of existence for \eqref{gSQG}. As a result, we conclude that the integral in \eqref{blowup:finite} is not finite.
\section{Proof of \cref{thm:main:beta} ii)}
Let $\tht \in C([0,T^*);H^{1+\be-2\al})$ be the maximal solution obtained in \cref{thm:main:beta:recall}. To establish the existence of a unique solution satisfying the energy inequality \eqref{energy:inequality}, we assume that the initial data $\tht_0$ satisfies
\[\Sob{\tht_0}{H^{1+\be-2\al}}<(4C(\al,\be))^{-1},\]
where $C(\al,\be)$ is the constant in \cref{lem:main:particular}.
We proceed just like in \eqref{Lamj:critical}-\eqref{H:critical} to obtain
\begin{align}\label{H:critical:proof2}
    \Sob{\tht(t)}{H^{1+\be-2\al}}^2+2\int_{0}^{t}\Sob{\Lam^{\al}\tht}{H^{1+\be-2\al}}^2\,ds\le \Sob{\tht_0}{H^{1+\be-2\al}}^2+2C(\al,\be)\int_{0}^{t}\Sob{\tht}{{H}^{1+\be-2\al}}\Sob{\Lam^{\al}\tht}{{H}^{1+\be-2\al}}^2\,ds
\end{align}
Let us define
\[T_2:=\sup \left\{t\in [0,T^*):\,\sup_{s\in[0,t]} \Sob{\tht(s)}{H^{1+\be-2\al}}<2\Sob{\tht_0}{H^{1+\be-2\al}}\right\}.\]
Since $\tht \in C([0,T^*);H^{1+\be-2\al})$, we have
\[T_2 \in (0,T^*)\]
Now, from \eqref{H:critical} and the definition of $T_2$, we have 
\begin{align*}
    \Sob{\tht(t)}{H^{1+\be-2\al}}^2+2\int_{0}^{t}\Sob{\Lam^{\al}\tht}{H^{1+\be-2\al}}^2\,ds&\le \Sob{\tht_0}{H^{1+\be-2\al}}^2+4C(\al,\be)\Sob{\tht_0}{H^{1+\be-2\al}}\int_0^t\Sob{\Lam^{\al}\tht}{{H}^{1+\be-2\al}}^2\,ds\\
    &\le \Sob{\tht_0}{H^{1+\be-2\al}}^2+\int_0^t\Sob{\Lam^{\al}\tht}{{H}^{1+\be-2\al}}^2\,ds.
\end{align*}
for all $t\in[0,T_2)$.
This implies that
\begin{align}\label{H:critical:required}
    \Sob{\tht(t)}{H^{1+\be-2\al}}^2+\int_{0}^{t}\Sob{\Lam^{\al}\tht}{H^{1+\be-2\al}}^2\,ds\le \Sob{\tht_0}{H^{1+\be-2\al}}^2
\end{align}
for all $t\in [0,T_2)$. To prove the claim that $T_2=T^*$, we argue by contradiction. Assume that $T_2<T^*$. Since $\tht \in C([0,T^*);H^{1+\be-2\al})$, we can pass to the limit in \eqref{H:critical:required} and obtain 
\begin{align*}
    \Sob{\tht(T_2)}{H^{1+\be-2\al}} \le \Sob{\tht_0}{H^{1+\be-2\al}}.
\end{align*}
We again use the fact that $\tht \in C([0,T^*);H^{1+\be-2\al})$ to deduce the existence of a time $T_3\in (T_2,T^*)$ such that 
\begin{align}\label{H:critical:T2-T3}
    \Sob{\tht(s)}{H^{1+\be-2\al}}<2\Sob{\tht_0}{H^{1+\be-2\al}},\quad \forall s\in[T_2,T_3].
\end{align}
From \eqref{H:critical:T2-T3} and the definition of $T_2$, we conclude
\begin{align*}
     \Sob{\tht(s)}{H^{1+\be-2\al}}<2\Sob{\tht_0}{H^{1+\be-2\al}},\quad \forall s\in[0,T_3].
\end{align*}
As a result, we have
\begin{align*}
    \sup_{s\in[0,T_3]}\Sob{\tht(s)}{H^{1+\be-2\al}}<2\Sob{\tht_0}{H^{1+\be-2\al}}.
\end{align*}
Thus $T_2\ge T_3$ which contradicts the choice of $T_3$. Therefore, we deduce that $T_2=T^*$. As a result, from \eqref{H:critical:required}, we conclude that
\begin{align*}
    \int_0^{T^*}\Sob{\Lam^{\al}\tht(s)}{H^{1+\be-2\al}}^2\,ds\le \Sob{\tht_0}{H^{1+\be-2\al}}.
\end{align*}
Therefore, by part i), we must have $T^*=\infty$. Again, by invoking \eqref{H:critical:required}, it follows that
    \begin{align}\label{H:critical:infinite:time}
    \Sob{\tht(t)}{H^{1+\be-2\al}}^2+\int_{0}^{t}\Sob{\Lam^{\al}\tht}{H^{1+\be-2\al}}^2\,ds\le \Sob{\tht_0}{H^{1+\be-2\al}}^2,\quad \forall t \in [0,\infty)
\end{align}
thus completing the proof of Theorem 1.2 ii).
\section{Proof of \cref{thm:main:beta} iii)}
Let $\tht \in C([0,\infty);H^{1+\be-2\al})\cap L^2([0,\infty);\Hdot^{1+\be-\al})$ be the solution of \eqref{gSQG} emanating from initial data $\tht_0$ which satisfies $\Sob{\tht_0}{H^{1+\be-2\al}}<(4C(\al,\be))^{-1}$, where $C(\al,\be)$ is the constant in \cref{lem:main}. The proof is divided into two steps.
\subsection{Step 1} First we prove that
\begin{align*}
    \lim_{t\to \infty}\Sob{\tht(t)}{L^2}=0.
\end{align*}
Integrating \eqref{L2} over $[r,t]$ for $t\ge r\ge 0$, we obtain
\begin{align*}
    \Sob{\tht(t)}{L^2}^2+2\int_r^t\Sob{\Lam^\al \tht(s)}{L^2}^2\,ds=\Sob{\tht(r)}{L^2}^2,
\end{align*}
For any $\delta>0$, we define
\begin{align*}
    w_{\delta}(x,t):=\mathcal{F}^{-1}(\chi_{\{|\xi|\le\delta\}}\widehat{\tht})(x,t),
\end{align*}
and
\begin{align*}
    v_\delta(x,t):=\mathcal{F}^{-1}(\chi_{\{|\xi|>\delta\}}\widehat{\tht})(x,t),
\end{align*}
for all $x\in \mathbb{R}^2$ and $t\ge 0$. Then we can see that $w_\delta(x,t)$ evolves according to
\begin{align*}
    \bdy_t w_\delta+(-\Delta)^{\al}w_\delta+ \mathcal{F}^{-1}(\chi_{\{|\xi|\le\delta\}}\mathcal{F}(u_\tht \cdot \nabla \tht))=0.
\end{align*}
We take the $L^2-$ inner product with $w_\delta$ in the above equation and use Plancherel's identity and the Cauchy-Schwarz inequality to obtain
\begin{align*}
    \frac{1}{2}\frac{d}{dt}\Sob{w_\delta(t)}{L^2}^2+\Sob{\Lam^{\al}w_\delta(t)}{L^2}^2&\le |\lbn w_\delta,\mathcal{F}^{-1}(\chi_{\{|\xi|\le \delta\}}\mathcal{F}(u_\tht \cdot \nabla \tht)) \rbn_{L^2}|\\
    &=|\lbn \widehat{w_\delta},\chi_{\{|\xi|\le \delta\}}\mathcal{F}(u_\tht \cdot \nabla \tht) \rbn|\\
    &\le \int_{|\xi|\le \delta}|\widehat{w_\delta}||\mathcal{F}(u_\tht \cdot \nabla \tht)|\,d\xi\\
    &\le \delta^{\kappa}\int_{|\xi|\le \delta}|\widehat{w_\delta}||\xi|^{-\kappa}|\mathcal{F}(u_\tht \cdot \nabla \tht)|\,d\xi, 
\end{align*}
for $\kap>0$. Applying Holder's inequality, Plancherel's identity, and using \eqref{commutator:decomposition}, we have
\begin{align}\label{w:delta}
    \frac{1}{2}\frac{d}{dt}\Sob{w_\delta(t)}{L^2}^2+\Sob{\Lam^{\al}w_\delta(t)}{L^2}^2&\le \delta^{\kappa}\Sob{w_\delta}{L^2}\Sob{[\nabla^{\perp}\Lam^{\be-2}\cdot,\nabla \tht]\tht}{\Hdot^{-\kappa}}.
\end{align}
Let $\kappa \in(0,1)$. Applying \cref{lem:commutator1} with $s=2-\be$, $\rho_1=\al+\kappa,\,\rho_2=\al$, we have
\begin{align}\label{w:delta:2}
     \frac{1}{2}\frac{d}{dt}\Sob{w_\delta(t)}{L^2}^2+\Sob{\Lam^{\al}w_\delta(t)}{L^2}^2&\le \delta^{\kappa}C(\al,\be,\kappa)\Sob{w_\delta}{L^2}\Sob{ \tht}{\Hdot^{1+\be-\al-\kappa}}\Sob{\tht}{\Hdot^{\al}}\notag\\
     &\le \delta^{\kappa}C(\al,\be,\kappa)\Sob{w_\delta}{L^2}\Sob{ \Lam^{\al}\tht}{\Hdot^{1+\be-2\al-\kappa}}\Sob{\Lam^\al\tht}{L^2}\notag\\
     &\le \delta^{\kappa}C(\al,\be,\kappa)\Sob{w_\delta}{L^2}\Sob{\Lam^{\al}\tht}{H^{1+\be-2\al}}\Sob{\Lam^\al\tht}{L^2}, 
\end{align}
where the last step follows since $1+\be-2\al\ge 2$. By Plancherel's identity, we have
\begin{align}\label{w:delta:L2}
    \Sob{w_\delta}{L^2}^2=\int_{\{|\xi|\le \delta\}}|\widehat{\tht}|^2\,d\xi\le \Sob{\widehat{\tht}}{L^2}^2=\Sob{\tht}{L^2}^2\le \Sob{\tht_0}{L^2}^2.
\end{align}
We integrate \eqref{w:delta:2} over $[0,t]$, use \eqref{w:delta:L2}, Plancherel's identity, and the Cauchy-Schwarz inequality to obtain
\begin{align*}
&\Sob{w_\delta}{L^2}^2+2\int_0^t\Sob{\Lam^{\al}w_\delta (s)}{L^2}^2\,ds\\&\le \Sob{w_\delta(0)}{L^2}^2+\delta^{\kappa}C(\al,\be,\kappa)\Sob{\tht_0}{L^2}\int_0^t \Sob{\Lam^{\al}\tht}{H^{1+\be-2\al}}\Sob{\Lam^\al\tht}{L^2}\\
&\le \Sob{w_\delta(0)}{L^2}^2+\delta^{\kappa}C(\al,\be,\kappa)\Sob{\tht_0}{L^2}\left(\int_0^t \Sob{\Lam^{\al}\tht}{H^{1+\be-2\al}}^2\,ds\right)^{\frac{1}{2}}\left(\int_0^t\Sob{\Lam^\al\tht}{L^2}^2\,ds\right)^{\frac{1}{2}}.
\end{align*}
From \eqref{L2} and \eqref{H:critical:infinite:time}, we have
\begin{align}\label{theta:L2:H}
\int_0^t \Sob{\Lam^{\al}\tht}{H^{1+\be-2\al}}^2\,ds\le \Sob{\tht_0}{H^{1+\be-2\al}}^2,\quad\text{and}\quad\int_0^t\Sob{\Lam^\al\tht}{L^2}^2\,ds\le \Sob{\tht_0}{L^2}^2\end{align}
for all $t\ge0$. Thus, we obtain
\begin{align*}
    \Sob{w_\delta(t)}{L^2}\le \Gamma_{\delta}, \quad \forall t\ge 0,
\end{align*}
where 
\begin{align*}
    \Gamma_{\delta}:=\left(\Sob{w_\delta(0)}{L^2}^2+\delta^{\kappa}C(\al,\be,\kappa)\Sob{\tht_0}{L^2}^2\Sob{\tht_0}{H^{1+\be-2\al}}\right)^{\frac{1}{2}}.
\end{align*}
Also,  
\begin{align*}
    \int_0^\infty\Sob{\Lam^\al w_\delta(t)}{L^2}^2\,ds\le \frac{1}{2}\Gamma_{\delta}.
\end{align*}
Clearly, we have
\begin{align*}
    \lim_{\delta\to 0}\Gamma_{\delta}=0,
\end{align*}
since $\kappa>0$. As a result, for each $\eps>0$, there exists $\delta_1>0$ such that 
\begin{align}\label{w:delta1:small}
    \Sob{w_{\delta_1}(t)}{L^2}<\frac{\eps}{2\sqrt{2}}, \quad \int_0^\infty\Sob{\Lam^{\al}w_{\delta_1}(s)}{L^2}^2<\eps,
\end{align}
for all $t\ge 0$. Now we study the evolution of  $v_{\delta_1}(x,t)$, which satisfies
\begin{align*}
    \bdy_t v_{\delta_1}+(-\Delta)^{\al}v_{\delta_1}+ \mathcal{F}^{-1}(\chi_{\{|\xi|>\delta_1\}}\mathcal{F}(u_\tht \cdot \nabla \tht))=0.
\end{align*}
Applying $e^{s(-\Delta)^{\al}}$ (with $s\in[0,t]$) to the above equation, and integrating over $[0,t]$, we obtain
\begin{align*}
    v_{\delta_1}(t)=e^{-t(-\Delta)^{\al}}v_{\delta_1}(0)-\int_0^t e^{-(t-s)(-\Delta)^{\al}}\mathcal{F}^{-1}\left(\chi_{\{|\xi|>\delta_1\}}\mathcal{F}(u_\tht \cdot \nabla \tht)(s)\right)\,ds.
\end{align*}
Let $\kappa \in (0,1)$. We take $\Hdot^{-\kappa}-$ norm in the above equality to obtain
\begin{align}\label{v:delta1}
    \Sob{v_{\delta_1}(t)}{\Hdot^{-\kappa}}\le &\Sob{e^{-t(-\Delta)^{\al}}v_{\delta_1}(0)}{\Hdot^{-\kappa}}\notag\\
    &+ \int_0^t \Sob{e^{-(t-s)(-\Delta)^{\al}}\mathcal{F}^{-1}\left(\chi_{\{|\xi|>\delta_1\}}\mathcal{F}(u_\tht \cdot \nabla \tht))\right)}{\Hdot^{-\kappa}}\,ds.
\end{align}
 The first term on the right in the above inequality is estimated as 
 \begin{align*}
     \Sob{e^{-t(-\Delta)^{\al}}v_{\delta_1}(0)}{\Hdot^{-\kappa}}^2&=\int_{\mathbb{R}^2}|\xi|^{-2\kappa}e^{-2t|\xi|^{2\al}}|\widehat{v_{\delta_1}}(0)|^2\,d\xi\\
     &=\int_{|\xi|>\delta_1}|\xi|^{-2\kappa}e^{-2t|\xi|^{2\al}}|\widehat{\tht_{0}}|^2\,d\xi\\
     &\le \delta_1^{-2\kappa}e^{-2t\delta_1^{2\al}}\Sob{\widehat{\tht_{0}}}{L^2}^2
 \end{align*}
Using Plancherel's identity, we conclude
\begin{align*}
    \Sob{e^{-t(-\Delta)^{\al}}v_{\delta_1}(0)}{\Hdot^{-\kappa}}\le \delta_1^{-\kappa}e^{-t\delta_1^{2\al}}\Sob{\tht_0}{L^2}. 
\end{align*}
The second term on the right of \eqref{v:delta1} is estimated by using \eqref{commutator:decomposition} and applying \cref{lem:commutator1} with $s=2-\be$, $\rho_1=\al+\kappa$ and $\rho_2=\al$ to obtain
\begin{align*}
    &\int_0^t \Sob{e^{-(t-s)(-\Delta)^{\al}}\mathcal{F}^{-1}\left(\chi_{\{|\xi|>\delta_1\}}\mathcal{F}(u_\tht \cdot \nabla \tht))\right)}{\Hdot^{-\kappa}}\,ds\\
    &\le \int_0^t e^{-(t-s)\delta_1^{2\al}}\left(\int_{\{|\xi|>\delta_1\}}|\xi|^{-2\kappa}|\mathcal{F}(u_{\tht} \cdot \nabla \tht)|^2\right)^{\frac{1}{2}}\,ds\\
    &\le C(\al,\be,\kappa)\int_0^{t}e^{-(t-s)\delta_1^{2\al}}\Sob{\tht}{\Hdot^{1+\be-\al-\kappa}}\Sob{\tht}{\Hdot^{\al}}\,ds\\
    &\le C(\al,\be,\kappa)\int_0^{t}e^{-(t-s)\delta_1^{2\al}}\Sob{\Lam^{\al}\tht}{H^{1+\be-2\al}}\Sob{\Lam^{\al}\tht}{L^2}\,ds.
\end{align*}
Taking the $L^2([0,\infty))-$ norm in \eqref{v:delta1}, we obtain
\begin{align*}
    &\Sob{v_{\delta_1}}{L^2([0,\infty);\Hdot^{-\kappa})}\\&\le \delta_1^{-\kappa}\Sob{\tht_0}{L^2}\left(\int_0^\infty e^{-2t\delta_{1}^{2\al}}\,dt\right)^{\frac{1}{2}}\\&+C(\al,\be,\kappa)\left(\int_0^\infty\left(\int_0^\infty\chi_{(s,\infty)}(t)e^{-(t-s)\delta_1^{2\al}}\Sob{\Lam^\al \tht}{H^{1+\be-2\al}}\Sob{\Lam^\al \tht}{L^2}\,ds\right)^2\,dt\right)^{\frac{1}{2}}\\
    &\le \frac{1}{\sqrt{2}}\delta_1^{-\kappa-\al}\Sob{\tht_0}{L^2}+C(\al,\be,\kappa)\int_0^\infty e^{s\delta_1^{2\al}}\Sob{\Lam^{\al}\tht(s)}{H^{1+\be-2\al}}\Sob{\Lam^{\al}\tht(s)}{L^2}\left(\int_s^\infty e^{-2t\delta_1^{2\al}}\,dt\right)^{\frac{1}{2}}\,ds\\
    &\le \frac{1}{\sqrt{2}}\delta_1^{-\kappa-\al}\Sob{\tht_0}{L^2}+C(\al,\be,\kappa)\delta_1^{-\al}\int_0^\infty \Sob{\Lam^{\al}\tht(s)}{H^{1+\be-2\al}}\Sob{\Lam^{\al}\tht(s)}{L^2}\,ds.
\end{align*}
Applying the Cauchy-Schwarz inequality and using \eqref{L2} and \eqref{H:critical:infinite:time}, we obtain
\begin{align*}
    \Sob{v_{\delta_1}}{L^2([0,\infty);\Hdot^{-\kappa})}&\le \frac{1}{\sqrt{2}}\delta_1^{-\kappa-\al}\Sob{\tht_0}{L^2}+C(\al,\be,\kappa)\delta_1^{-\al}\left(\int_0^\infty \Sob{\Lam^{\al}\tht(s)}{H^{1+\be-2\al}}^2\,ds\right)^{\frac{1}{2}}\left(\int_0^\infty \Sob{\Lam^{\al}\tht(s)}{L^2}^2\,ds\right)^{\frac{1}{2}}\\
    &\le \frac{1}{\sqrt{2}}\delta_1^{-\kappa-\al}\Sob{\tht_0}{L^2}+C(\al,\be,\kappa)\delta_1^{-\al}\Sob{\tht_0}{H^{1+\be-2\al}}\Sob{\tht_0}{L^2}.
\end{align*}
Therefore, we conclude that $v_{\delta_1}\in L^{2}([0,\infty);\Hdot^{-\kappa})$. Since $\tht=w_{\delta_1}+v_{\delta_1}$, we  also have that $v_{\delta_1}=\tht-w_{\delta_1}\in L^{2}{([0,\infty);\Hdot^{\al})}$ by \eqref{theta:L2:H} and \eqref{w:delta1:small}. By interpolation inequality, we have
\begin{align*}
    \Sob{v_{\delta_1}}{L^{2}([0,\infty);L^2)}\le \Sob{v_{\delta_1}}{L^{2}([0,\infty);\Hdot^{-\kappa})}^{\frac{\al}{\kappa+\al}}\Sob{v_{\delta_1}}{L^{2}([0,\infty);\Hdot^\al)}^{\frac{\kappa}{\kappa+\al}}.
\end{align*}
As a result, we have $v_{\delta_1}\in L^{2}([0,\infty);L^2)$. Using this inclusion, we now define the following set:
\begin{align*}
    A(\eps)=\left\{t\ge 0:\Sob{v_{\delta_1}(t)}{L^2}\ge \frac{\eps}{2\sqrt{2}}\right\}\subset [0,\infty).
\end{align*}
We observe that
\begin{align}\label{T:epslion:def}
    |A(\eps)|=&\int_{A(\eps)}\,dt\le \frac{8}{\eps^2}\int_{A(\eps)}\Sob{v_{\delta_1(t)}}{L^2}^2\,dt
    \le \frac{8}{\eps^2}\int_0^\infty \Sob{v_{\delta_1(t)}}{L^2}^2=:T_\eps,
\end{align}
where $|\cdot|$ denotes the Lebesgue's measure. Since $v_{\delta_1}\in L^2([0,\infty);L^2)$, we have that $T_\eps<\infty$. From \eqref{T:epslion:def}, we can deduce that $([0,T_\eps +1]\backslash A(\eps))\ne \phi;$. Therefore, there exists $t_\eps \in ([0,T_\eps +1]\backslash A(\eps))$ which satisfies $\Sob{v_{\delta_1}(t_\eps)}{L^2}<\frac{\eps}{2\sqrt{2}}$. As a result, by \eqref{w:delta1:small}, we have 
\begin{align}\label{theta:T:epsilon}
    \Sob{\tht(t_\eps)}{L^2}\le\Sob{w_{\delta_1}(t_\eps)}{L^2}+\Sob{v_{\delta_1}(t_\eps)}{L^2}<\frac{\eps}{\sqrt{2}}.
\end{align}
Observe that
\[\Sob{\tht(t_\eps)}{H^{1+\be-2\al}}\le \Sob{\tht_0}{H^{1+\be-2\al}}<(4C(\al,\be))^{-1}.\] Now we initialize \eqref{gSQG} with data ${\tht(t_\eps)}$. Then, by part ii), there exists the unique global solution of \eqref{gSQG} satisfying
\begin{align}\label{energy:inequality:theta:T:epsilon}
    \Sob{\tht(t+t_\eps)}{L^2}^2+2\int_{t_{\eps}}^{t+t_\eps} \Sob{\Lam^{\al}\tht(s)}{L^2}^2\,ds=\Sob{\tht(t_\eps)}{L^2}^2, \quad \forall t\ge 0.
\end{align}
In particular, we have
\[\Sob{\tht(t+t_\eps)}{L^2}\le \Sob{\tht(t_\eps)}{L^2}<\eps,\quad \forall t\ge 0.\]
This implies 
\[\Sob{\tht(s)}{L^2}<\eps,\quad \forall s\ge t_\eps.\]
In other words, 
\[\lim_{t\to \infty}\Sob{\tht(t)}{L^2}=0.\]
\subsection{Step 2} Now we prove that $\lim_{t\to \infty}\Sob{\tht(t)}{H^{1+\be-2\al}}=0.$
Let\[\zeta=\frac{\al}{1+\be-\al}\]
Using interpolation, we have
\[\Sob{\tht}{\Hdot^{1+\be-2\al}}\le C\Sob{\tht}{L^2}^{\zeta}\Sob{\tht}{\Hdot^{1+\be-\al}}^{1-\zeta}.\]
As a result, we have
\begin{align*}
    \Sob{\tht}{L^{\frac{2}{1-\zeta}}([0,\infty);\Hdot^{1+\be-2\al})}^{\frac{2}{1-\zeta}}\le \int_0^\infty \Sob{\tht}{L^2}^{\frac{2\zeta}{1-\zeta}}\Sob{\tht}{\Hdot^{1+\be-\al}}^2\,dt.
\end{align*}
Using \eqref{H:critical:infinite:time}, we obtain
\begin{align*}
     \Sob{\tht}{L^{\frac{2}{1-\zeta}}([0,\infty);\Hdot^{1+\be-2\al})}\le \Sob{\tht_0}{L^2}\Sob{\tht}{L^{2}([0,\infty);\Hdot^{1+\be-\al})}^{1-
     \zeta}. 
\end{align*}
Since $\tht_0\in L^2$ and $\tht\in L^{2}([0,\infty);\Hdot^{1+\be-\al})$, we deduce that $\tht\in L^{\frac{2}{1-\zeta}}([0,\infty);\Hdot^{1+\be-2\al})$. Using this inclusion, we can define the following set:
\begin{align*}
    B(\eps)=\left\{t\ge t_\eps:\Sob{\tht(t)}{\Hdot^{1+\be-2\al}}\ge \frac{\eps}{\sqrt{2}}\right\}\subset [0,\infty),
\end{align*}
where $t_\eps$ was defined in Step 2. We observe
\begin{align}\label{T:epsilon:bar:def}
    |B(\eps)|=&\int_{B(\eps)}\,dt\le \left(\frac{\eps}{\sqrt{2}}\right)^{-\frac{2}{1-\zeta}}\int_{B(\eps)}\Sob{\tht(t)}{\Hdot^{1+\be-2\al}}^{\frac{2}{1-\zeta}}\,dt
      \le \left(\frac{\eps}{\sqrt{2}}\right)^{-\frac{2}{1-\zeta}}\int_0^\infty \Sob{\tht(t)}{\Hdot^{1+\be-2\al}}^{\frac{2}{1-\zeta}}\,dt=:\bar{T}_{\eps}
\end{align}
Since $\tht\in L^{\frac{2}{1-\zeta}}([0,\infty);\Hdot^{1+\be-2\al})$, we have that $\bar{T_\eps}<\infty$. From \eqref{T:epsilon:bar:def}, we can deduce that $([t_\eps,t_\eps+\bar{T_\eps}+1]\backslash B(\eps))\ne \phi;$. Therefore, there exists a time $\bar{t_\eps} \in ([t_\eps,t_\eps+\bar{T_\eps} +1]\backslash B(\eps))$ which satisfies $\Sob{\tht(\bar{t_\eps})}{\Hdot^{1+\be-2\al}}<\frac{\eps}{\sqrt{2}}$. Using \eqref{theta:T:epsilon}, we have
\begin{align*}
    \Sob{\tht(\bar{t_\eps})}{H^{1+\be-2\al}}^2&=\Sob{\tht(\bar{t_\eps})}{L^2}^2+\Sob{\tht(\bar{t_\eps})}{\Hdot^{1+\be-2\al}}^2\\
    &\le \Sob{\tht({t_\eps})}{L^2}^2+\Sob{\tht(\bar{t_\eps})}{\Hdot^{1+\be-2\al}}^2<\eps^2.
\end{align*}
From \eqref{energy:inequality:theta:T:epsilon}, we also have 
\begin{align*}
    \Sob{\bar{\tht_\eps}}{H^{1+\be-2\al}}\le \Sob{\tht_0}{H^{1+\be-2\al}}<(4C(\al,\be))^{-1}.
\end{align*}
Now we initialize \eqref{gSQG} with data ${\tht(\bar{t_\eps})}$. Then, by part ii), there exists the unique global solution of \eqref{gSQG} satisfying
\begin{align*}
    \Sob{\tht(t+\bar{t_\eps})}{H^{1+\be-2\al}}^2+2\int_{\bar{t_\eps}}^{t+\bar{t_\eps}}\Sob{\Lam^{\al}\tht(s)}{H^{1+\be-2\al}}^2\,ds\le \Sob{\tht(\bar{t_\eps})}{H^{1+\be-2\al}}^2, \quad \forall t\ge 0.
\end{align*}
In particular, we have
\[\Sob{\tht(t+\bar{t_\eps})}{H^{1+\be-2\al}}\le \Sob{\tht(\bar{t_\eps})}{H^{1+\be-2\al}}<\eps,\quad \forall t\ge 0.\]
This implies 
\[\Sob{\tht(s)}{H^{1+\be-2\al}}<\eps,\quad \forall s\ge \bar{t_\eps}.\]
In other words, 
\[\lim_{t\to \infty}\Sob{\tht(t)}{H^{1+\be-2\al}}=0,\]
thus completing the proof of Theorem 3.
\\
\appendix

\section{Proof of \cref{lem:commutator2}}\label{sec:app}
We give a sketch of the proof for \eqref{commutator2:A}. The proof for \eqref{commutator2:B} is similar.
First, let us define
    \begin{align*}
        \mathcal{L}^{s}_{i}(f,g,h):=\iint m_{s,i}(\xi,\eta)\hat{f}(\xi-\eta)\hat{g}(\eta)\overline{\hat{h}(\xi)}\, d\eta \, d\xi,
    \end{align*}
where
    \begin{align*}
        m_{s,i}(\xi,\eta):= \phi_{i}(\xi)\abs{\xi}^{s}\xi_{\ell}-\phi_{i}(\xe)\abs{\xi-\eta}^{s}(\xi-\eta)_{\ell}.
    \end{align*}
By Plancherel's theorem, we have
    \begin{align}\label{def:L:comm:relation}
        \mathcal{L}^{s}_{i}(f,g,h)=\lb[\Lam^{s}\bdy_{\ell}\Delta_i ,g]f,h\rb_{L^2}.
    \end{align}
It is therefore equivalent to obtain bounds on $\mathcal{L}_{i}^{s+\rho}$. 

Let $\mathbf{A}(\tau):=(\xi-\eta)+\tau \eta$. By the facts that $\supp \phi_{i} \subset \mathcal{A}_{i}$, $\supp \nabla \phi\subset\mathcal{A}_0$, and $\supp \hat{h}\in\mathcal{A}_j$, we have
 \begin{align}\label{E:meanvalue2}
        |m_{s+\rho,i}(\xi,\eta)|&=\bigg|\int_{0}^{1}\frac{d}{{d\tau}}\left(\phi_{i}(\mathbf{A}(\tau))\abs{\mathbf{A}(\tau)}^{s+\rho}\mathbf{A}(\tau)_{\ell}\right)d\tau\bigg|\notag\\
        &=\bigg|\int_{0}^{1}\bigg\{\nabla \phi(2^{-i}\mathbf{A}(\tau))\cdot(2^{-i}\eta)\mathbf{A}(\tau)_{\ell}\notag\\
        &\qquad\qquad+(s+\rho)\phi_{i}(\mathbf{A}(\tau))\abs{\mathbf{A}(\tau)}^{-2}\left(\mathbf{A}(\tau)\cdot \eta\right)\mathbf{A}(\tau)_{\ell}\notag\\
        & \qquad\qquad+\phi_{i}(\mathbf{A}(\tau))\eta_{\ell}\bigg\} \abs{\mathbf{A}(\tau)}^{s+\rho}d\tau\bigg|\notag\\
    &\le C\abs{\eta}\int_{0}^{1}\abs{\mathbf{A}(\tau)}^{s+\rho}d\tau\notag\\
    &\leq C_k\abs{\eta}\abs{\xi}^{s+\rho},
    \end{align}
 where we used the observation that $m_{s+\rho,i}(\xi,\eta)=0$ if $\abs{\mathbf{A}(\tau)}\notin \mathcal{A}_{i}$ and the assumption that $|i-j|\le k$.
We obtain 
\begin{align}
    |\mathcal{L}_{i}^{s+\rho}(f,g,h)|\le C\left|\iint |\xi|^{s} \hat{f}(\xi-\eta)\widehat{\Lam g}(\eta)\overline{\widehat{\Lam^{\rho}h}(\xi)}\, d\eta \, d\xi\right|
\end{align}
Applying lemma 4.1 in \cite{JollyKumarMartinez2020a} with $\si=s$ and $\eps=\nu$ and using Bernstein's inequality completes the proof.
\bibliographystyle{plain}
\bibliography{main_bib.bib}
\vspace{.3in}
\noindent Anuj Kumar\\ 
{\footnotesize
Department of Mathematics\\
Indian Institute of Technology Jodhpur, India\\
 Email: \url{akumar241@outlook.com}} \\[.2cm]
\end{document}